\newcommand{\dd}{\mathop{}\!\mathrm{d}}
\let\del\partial
\let\Re\relax
\DeclareMathOperator{\Re}{Re}
\let\div\relax
\DeclareMathOperator{\div}{div}
\newcommand{\LL}[0]{\boldsymbol L}
\newcommand{\aps}[0]{{\textup{aps}}}
\newcommand{\ess}[0]{\textup{ess}}
\newcommand{\TT}[0]{\boldsymbol T}
\newcommand{\Uper}{U^\per}
\newcommand{\Ulin}[0]{U^\lin}
\newcommand\widecheck[1]{%
\savestack{\tmpbox}{\stretchto{%
  \scaleto{%
    \scalerel*[\widthof{\ensuremath{#1}}]{\kern-.6pt\bigwedge\kern-.6pt}%
    {\rule[-\textheight/2]{1ex}{\textheight}}
  }{\textheight}%
}{0.5ex}}%
\stackon[1pt]{#1}{\scalebox{-1}{\tmpbox}}%
}
\newcommand{\coloneq}{\mathrel{\mathop:}=}
\newtheorem{thm}{Theorem}[section]
\newtheorem{lem}[thm]{Lemma}
\newtheorem{prop}[thm]{Proposition}
\theoremstyle{definition}
\theoremstyle{remark}
\newtheorem{rem}{Remark}
\numberwithin{equation}{section}
\newcommand{\ABC}{Albritton--Bru\'e--Colombo}
\DeclareMathOperator{\pr}{Pr}
\newcommand{\lin}[0]{{\textsf{L}}}
\newcommand{\per}[0]{{\textsf{P}}}
\renewcommand{\AA}[0]{\boldsymbol A}
 \newcommand{\KK}{\boldsymbol K}
  \newcommand{\MM}{\boldsymbol M}
   \renewcommand{\SS}{\boldsymbol S}
   \newcommand{\DD}[0]{\boldsymbol D}
\renewcommand{\bar}{\overline}
\DeclareMathOperator{\BS}{BS}
\newcommand{\myt}[1]{\frac1{t^{#1}}}
\newcommand{\ii}{\textup{i}}
\newcommand{\ee}{\textup{e}}
\newcommand{\Lab}{\LL_{\alpha,\beta}}
\newcommand{\lab}{\lambda_{\alpha,\beta}}
\newcommand{\LLab}{\LL'_{\alpha,\beta}}
\newcommand{\llab}{\lambda'_{\alpha,\beta}}
\newcommand{\Tab}[0]{\TT_{\!\alpha,\beta}}
\newcommand{\prab}[0]{\pr_{\alpha,\beta}}
\newcommand{\Da}{\DD_{\alpha}}
\newcommand{\fd}{(-\Delta)}
\newcommand{\fheat}{\ee^{-t\fd^\alpha}}
\newcommand{\fduham}[1]{\mathcal G_{#1}}
\begin{document}

\title{Nonuniqueness of Leray--Hopf solutions to the forced fractional Navier--Stokes Equations in three dimensions, up to the J. L. Lions exponent }
\author{Calvin Khor, Changxing Miao, and  Xiaoyan Su}
\date{}
\maketitle
\abstract{
In this paper, we show that for $\alpha\in(1/2,5/4)$,  there exists a force $f$ and two distinct Leray--Hopf flows $u_1,u_2$ solving  the forced fractional Navier--Stokes equation starting from rest. This shows that the J.L. Lions exponent is sharp in the class of Leray--Hopf solutions for the forced fractional Navier--Stokes equation. 
\\[0.5em]
\noindent \textbf{Keywords:} fractional Navier--Stokes Equation, nonuniqueness, linear instability, Leray--Hopf solution
\\
\noindent \textbf{MSC 2020 Classification:} Primary 35F50; Secondary 35A02, 35Q35

}

\section{Introduction}

Consider Leray--Hopf solutions of the fractional Navier--Stokes equation starting from rest, with forcing $f$:
\begin{align}
	\del_t u + u\cdot\nabla u + \nabla p + \fd^{\alpha}u = f, \quad u|_{t=0} = 0 \label{e:ns}
\end{align}
Recall that a solution is Leray--Hopf if it  belongs to the natural energy class $L^\infty(0,T; L^2(\mathbb R^3)) \cap L^2(0,T; H^\alpha(\mathbb R^3))$, (In what follows, we abbreviate $L^q(0,T;X(\mathbb R^3))$ as $L^q_T X$) attains the initial data strongly in $L^2$, and satisfies the energy inequality for a.e. $t\in(0,T]$ and all $s\in[0,t]$,
\[ \|u(t)\|^2_{L^2} +  \int_s^t \|u(t')\|_{H^\alpha }^2 \dd t'  \le \|u(s)\|^2_{L^2}.\]
We prove nonuniqueness of Leray--Hopf solutions, following the recent work of \ABC{} \cite{zbMATH07583008} which proved the case $\alpha=1$. They worked in the `similarity variables' (here adapted to our fractional parabolic scaling)
\begin{align}\begin{gathered}
	\xi = x/t^{1/2\alpha}, \quad \tau = \log t, \quad
	u(x,t)= \myt{1-1/2\alpha}U(\xi,\tau), \\ p(x,t) = \myt{2-1/\alpha}P(\xi,\tau), \quad f(x,t) = \myt{2-1/2\alpha} F(\xi,\tau), 
\end{gathered} \label{e:ss-variables}
\end{align} which transforms \eqref{e:ns} to
\begin{align}
	\del_\tau U + \left(\frac1{2\alpha}-1 - \frac1{2\alpha}\xi\cdot\nabla_\xi \right)U + U\cdot\nabla_\xi U + \fd^{\alpha} U + \nabla P = F. \label{e:ss-vel}
\end{align} These variables are named as such because the powers of $t$ in \eqref{e:ss-variables} are chosen so that a stationary solution to \eqref{e:ss-vel} corresponds to a self-similar solution of \eqref{e:ns}. We can also write the equation for the vorticity $\omega(x,t)=\myt{}\Omega(\xi,\tau)$  in similarity variables, 
\begin{align}
\del_\tau \Omega + \Big({-1} - \frac1{2\alpha}\xi\cdot\nabla_\xi \Big)\Omega + U\cdot\nabla_\xi \Omega - \Omega\cdot\nabla_\xi U + \fd^{\alpha} \Omega  = G.	\label{e:ss-vor}
\end{align}
Here, $G$ is the corresponding forcing, and $U$ is recovered from the vorticity $\Omega$ by inverting the curl via the Biot--Savart operator, i.e. $U=\BS\Omega$ where 
\[\BS\Omega(\xi)= - \int_{\mathbb R^3} \frac {\xi'\times \Omega(\xi-\xi')}{4\pi |\xi'|^3}\dd \xi'.\]

Most of their effort is devoted to the following construction of a compactly supported stationary unstable (forced) Euler vortex $\bar \Omega = \nabla_\xi\times \bar U$. 
Recall that vector fields can be written in cylindrical coordinates $V=V^r e_r + V^\theta e_\theta + V^z e_z.$ Let $L^2_{\aps}$ denote the space of square integrable, divergence-free, axisymmetric pure-swirl vector fields, i.e. 
$$ L^2_{\aps}=\{V\in L^2 : \div V = 0, \ V(r,\theta,z) = V^\theta(r,z)e_\theta\}.$$
\begin{lem}[\ABC{} \cite{zbMATH07583008}]\label{l:abc}
	There exists a vector field $\bar\Omega\in C^\infty_c(\mathbb R^3;\mathbb R^3)$ such that the operator \begin{align}
	-\LL \Omega \coloneq  \bar U\cdot \nabla \Omega - \Omega\cdot \nabla \bar U + U\cdot\nabla \bar \Omega -\bar\Omega \cdot\nabla U
\end{align} with domain 
\begin{align}D(\LL) = \{ \Omega \in L^2_{\aps}: \bar U\cdot\nabla \Omega\in L^2 \}\end{align}
 has an unstable eigenvalue $\lambda$, i.e. an eigenvalue with $\Re \lambda>0$.
\end{lem}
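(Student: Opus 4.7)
The plan is to follow the \ABC{} strategy of reducing the three-dimensional linearized Euler problem around $\bar\Omega$ to an effectively two-dimensional problem via axisymmetric symmetry, and then invoking Vishik's two-dimensional unstable vortex. Specifically, I would take $\bar\Omega$ axisymmetric with vorticity purely in the $e_\theta$-direction, i.e.\ $\bar\Omega = \bar\Omega^\theta(r,z)e_\theta \in L^2_{\aps}$, so that the associated $\bar U = \BS\bar\Omega$ lies in the meridional $(r,z)$-plane with no swirl. A direct cylindrical-coordinate computation (using $\partial_\theta e_r = e_\theta$ and $\partial_\theta e_z = 0$) then shows that each of the four terms $\bar U\cdot\nabla\Omega$, $\Omega\cdot\nabla\bar U$, $U\cdot\nabla\bar\Omega$, $\bar\Omega\cdot\nabla U$ is again axisymmetric pure-swirl whenever $\Omega \in L^2_{\aps}$. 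Hence $\LL$ preserves $L^2_{\aps}$, and the spectral problem may be recast as a scalar PDE in the meridional half-plane $\{r>0\}$ after introducing the Stokes stream function $\psi$ with $U^r = -r^{-1}\partial_z\psi$, $U^z = r^{-1}\partial_r\psi$.

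Next I would invoke Vishik's two-dimensional construction to obtain a profile $\bar\omega \in C^\infty_c(\mathbb R^2)$ such that the 2D Euler linearization $\omega \mapsto \bar u\cdot\nabla\omega + u\cdot\nabla\bar\omega$ (with $u = \nabla^\perp(-\Delta)^{-1}\omega$) admits an isolated eigenvalue $\lambda_0$ with $\Re\lambda_0 > 0$. The 3D profile $\bar\Omega$ is then built by placing this 2D vortex far from the axis of symmetry: for a large parameter $R$, set $\bar\Omega^\theta(r,z) = \bar\omega(r-R, z)$, so that $\bar\Omega \in C^\infty_c(\mathbb R^3)$ is supported in a small tubular neighborhood of the circle $\{r=R,\, z=0\}$, provided $R$ exceeds the radius of $\supp\bar\omega$. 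In the translated meridional coordinates $(\tilde r, \tilde z) = (r-R, z)$, the restriction of $\LL$ to $L^2_{\aps}$ differs from the 2D Euler linearization around $\bar\omega$ by terms of order $O(R^{-1})$, coming from the cylindrical geometric factors $1/r = 1/(R + \tilde r)$ present in $\nabla\cdot$, $\nabla\times$, and in the Biot--Savart kernel.

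The main obstacle will be carrying out the spectral perturbation rigorously. Although $\lambda_0$ lies in the open right half-plane while the essential spectra of both the 2D and 3D operators are contained in the imaginary axis (so $\lambda_0$ is isolated in the point spectrum of the 2D operator), the $1/r$ correction is neither small in operator norm nor obviously a relatively compact perturbation, and the 2D operator is non-self\-adjoint with potentially nontrivial Jordan structure, so Kato's analytic perturbation theory does not apply directly. I would proceed at the level of resolvents: fix a small closed contour $\Gamma \subset \{\Re\lambda > 0\}$ encircling $\lambda_0$ and disjoint from the rest of the spectrum of the 2D operator, then establish strong resolvent convergence of $(\lambda + \LL)^{-1}$ to the resolvent of the 2D operator, uniformly in $\lambda \in \Gamma$ as $R \to \infty$. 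Continuity of the Riesz spectral projection $\frac{1}{2\pi\ii}\oint_\Gamma (\lambda + \LL)^{-1}\dd\lambda$ then forces $\LL$ to have a nonzero spectral projection---and hence at least one eigenvalue---inside $\Gamma$ for $R$ sufficiently large, producing the desired unstable eigenvalue. Preservation of the axisymmetric pure-swirl class ensures the eigenfunction lies in $D(\LL)$.
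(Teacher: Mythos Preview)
The paper does not supply its own proof of this lemma; it is quoted from \ABC{} \cite{zbMATH07583008}, with only a one-paragraph description of the strategy. Your sketch faithfully reconstructs that strategy: take $\bar\Omega$ axisymmetric pure-swirl so that $\LL$ preserves $L^2_{\aps}$, place Vishik's 2D unstable profile at large radius $R$ from the symmetry axis, and exploit that the axisymmetric-without-swirl 3D Euler equations converge formally to 2D Euler as $r\to\infty$ to pull back the unstable eigenvalue via a Riesz projection on a small contour $\Gamma$ around Vishik's eigenvalue.

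One step in your outline needs sharpening. You propose to establish \emph{strong} resolvent convergence of $(\lambda+\LL)^{-1}$ to the 2D resolvent, uniformly on $\Gamma$, and then deduce continuity of the Riesz projection. Strong convergence is not sufficient here: for non-self-adjoint operators the projection $\frac1{2\pi\ii}\oint_\Gamma(\lambda+\LL)^{-1}\dd\lambda$ can collapse to zero in a strong limit even when the limiting projection has positive rank, so nothing forces $\LL$ to retain an eigenvalue inside $\Gamma$. The cure---which you can see executed later in this very paper for the operator $\LLab$ (the proof that $\prab\to\pr$ in norm)---is to decompose the linearised operator as $\MM+\SS+\KK$ with $\KK$ compact, prove strong resolvent convergence only for the piece $\MM+\SS$, and then use the factored resolvent identity together with the fact that (strongly convergent)$\circ$(compact) is norm-convergent to upgrade to \emph{norm} convergence of the projection. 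This is precisely the device \ABC{} employ in the $R\to\infty$ limit. With that amendment your outline matches the cited construction.
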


 Their construction is not trivial because the other known unstable flows in 3D like shear flows or 2.5D Euler flows have no decay. They instead built on the recent instability result of Vishik \cite{https://doi.org/10.48550/arxiv.1805.09426, https://doi.org/10.48550/arxiv.1805.09440}, for the forced 2D Euler equations. \ABC{} observed that the 2D Euler equations arises as a formal limit as $r\to\infty$ of the 3D axisymmetric-without-swirl Euler equations. This  allows a perturbative argument to transfer the instability from 2D to 3D.
 
The rest of \ABC's paper turns the linear instability in Euler into nonuniqueness for Navier--Stokes. In this note, we extend this nonuniqueness result to the equation \eqref{e:ns}, where $\alpha\in (1/2,5/4)$. Well-posedness for  \eqref{e:ns} when $\alpha\ge 5/4$ is a well-known result due to J.L. Lions \cite{lions1969quelques}. Our result shows that this is sharp in a stronger way than the non-uniqueness of distributional solutions in \cite{zbMATH07201159}, albeit with a non-zero forcing term.

Following \ABC{}, we proceed as follows. First, we show that for $\beta$ sufficiently large, the linearised operator $\Lab:D(\Lab)\subset L^2_{\aps}\to L^2_{\aps} $ around $\beta \bar\Omega$, defined by
\begin{align}\begin{alignedat}{-1}
D(\Lab)\coloneq \{\Omega\in L^2_{\aps}: \Omega\in H^{2\alpha},\  \xi\cdot\nabla\Omega\in L^2 \},\\
	 -\Lab \Omega \coloneq (-1 - \frac1{2\alpha}\xi\cdot\nabla_\xi + \fd^{\alpha})\Omega - \beta \LL\Omega  ,
\end{alignedat}\label{e:defn-Lab}
\end{align}
has an unstable eigenvalue. This corresponds to an unstable eigenvalue for the operator $\Tab$, which is $ \Lab$ written in the velocity formulation (see \eqref{e:defn-Tab} below). Let $\eta$ be the corresponding eigenvector of $\Tab$, and define $\Ulin=\Re (e^{\lambda t} \eta)$, which solves the linear equation $\del_t \Ulin =  \Lab  \Ulin $. To solve \eqref{e:ss-vel}, we use the ansatz
\[ U = \beta \bar U + \Ulin + \Uper\]
and find $\Uper$ by a fixed point argument in Subsection \ref{ss:nonunique}. Finally, undoing the similarity variable transformation proves the following:
\begin{thm}\label{t:main}
Let $\alpha\in(1/2,5/4)$ and  $\bar U=\BS \bar\Omega$ be the smooth unstable velocity from Lemma \ref{l:abc}. Set $\tau = \log t $, $\xi = xt^{-1/2\alpha}$. Then for $\tau \le T$, 
 $\bar u(x,t)\coloneq\frac\beta{t^{1-1/2\alpha}} \bar U(\xi,\tau)$ and $u(x,t)\coloneq \frac1{t^{1-1/2\alpha}}  U(\xi)$, where $U$ is constructed in Subsection \ref{ss:nonunique}, are two distinct Leray--Hopf solutions to \eqref{e:ns} with force $f(x,t) = \frac1{t^{2-1/2\alpha}} F(\xi,\tau)$ on a time interval $[0,e^T]$, where 
\[ F\coloneq \beta \left(\frac1{2\alpha}-1 - \frac1{2\alpha}\xi\cdot\nabla_\xi \right)\bar U + \beta^2 \bar U\cdot\nabla_\xi \bar U + \beta \fd^{\alpha} \bar U. \]  
In addition, $\bar u$ and $u$ belong to the borderline space $L^\infty_T L^{\frac3{2\alpha-1},\infty}$ and for any $k,j\ge0 $, $p\in[2,\infty]$ and $ t< e^T$,
\begin{equation}
t^{k/2}\|\bar u(t)\|_{\mathring W^{k\alpha,p}}+	t^{k/2}\|  u(t)\|_{\mathring W^{k\alpha,p}} + 	t^{j+1+k/2}\| \del_t^jf(t)\|_{\mathring W^{k\alpha,p}}
 \lesssim_{k} t^{\frac{p+3}{2\alpha p} -1}, \label{e:scaling}
\end{equation}
where we have written $\|v\|_{\mathring W^{s,p}} \coloneq \|\fd ^{s/2} v\|_{L^p}$.
\end{thm}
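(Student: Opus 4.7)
My plan is to follow the strategy sketched after Lemma \ref{l:abc}, adapting each step to handle the fractional dissipation $\fd^{\alpha}$. First, I establish linear instability of $\Lab$ for $\beta$ large. Writing $-\Lab = \beta(-\LL) + \Da$ with $\Da \coloneq -1 - \frac{1}{2\alpha}\xi\cdot\nabla + \fd^{\alpha}$, I rescale so that $-\Lab/\beta = -\LL + \beta^{-1}\Da$ is a small perturbation of $-\LL$. Since Lemma \ref{l:abc} supplies an isolated unstable eigenvalue of $-\LL$, analytic perturbation theory persists this to a nearby unstable eigenvalue of $-\Lab/\beta$, hence to an unstable eigenvalue $\lambda_\beta$ of $-\Lab$ with eigenvector $\eta\in D(\Lab)$. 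The required input is relative boundedness of $\Da$ against $-\LL$ with small bound on the right half-plane, which should be routine since $\bar\Omega$ is smooth and compactly supported. Inverting curl via Biot--Savart, $\eta$ yields a velocity eigenvector of $\Tab$, and $\Ulin(\xi,\tau)\coloneq \Re(e^{\lambda_\beta\tau}\BS\eta)$ solves $\del_\tau\Ulin = \Tab\Ulin$. In physical variables $u^{\lin}(x,t) = t^{-(1-1/2\alpha)}\Ulin(xt^{-1/2\alpha},\log t)$ vanishes strongly in $L^2$ as $t\to 0^+$ thanks to the factor $e^{\Re\lambda_\beta\tau}\to 0$ as $\tau\to-\infty$.

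Second is the nonlinear fixed point for $\Uper$. With the ansatz $U = \beta\bar U + \Ulin + \Uper$ and $F$ engineered so that $\beta\bar U$ is a stationary forced solution, $\Uper$ satisfies
\[ \del_\tau\Uper = \Tab\Uper - \mathbb P\bigl((\Ulin+\Uper)\cdot\nabla(\Ulin+\Uper)\bigr). \]
I seek $\Uper$ of strictly higher order than $\Ulin$, say $\|\Uper(\tau)\|\lesssim e^{2\Re\lambda_\beta\tau}$ as $\tau\to-\infty$, via the Duhamel formula
\[ \Uper(\tau) = -\int_{-\infty}^\tau e^{(\tau-s)\Tab}\,\mathbb P\bigl((\Ulin+\Uper)\cdot\nabla(\Ulin+\Uper)\bigr)(s)\,ds \]
and a contraction on a weighted space based on $H^{2\alpha}$. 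The main obstacle is the semigroup bound $\|e^{\tau\Tab}f\|_{H^s}\lesssim e^{\mu\tau}\|f\|_{H^s}$ for some $\mu\in(\Re\lambda_\beta,2\Re\lambda_\beta)$, which requires quantitative resolvent estimates for $\Tab$ that carefully track the fractional dissipation. The bilinear estimates for the nonlinearity use $\alpha>1/2$ for Sobolev embedding of $H^\alpha$ into $L^p$ with $p>3$, and $\alpha<5/4$ to keep the relevant norms sub-critical under the fractional parabolic scaling.

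Third, I undo the similarity variables to produce $\bar u$ and $u$, which are distinct because $\Ulin\not\equiv 0$. The uniform bound $U\in L^\infty_\tau L^{3/(2\alpha-1),\infty}_\xi$ inherited from the construction, combined with the critical scaling of \eqref{e:ns}, transforms into the claimed $L^\infty_T L^{3/(2\alpha-1),\infty}_x$ bound for both $\bar u$ and $u$. For $\alpha<5/4$ the Lorentz exponent $3/(2\alpha-1)$ is strictly supercritical for $L^2_x$ under the scaling $t^{-(1-1/2\alpha)}$, forcing $\bar u(t),u(t)\to 0$ strongly in $L^2_x$ as $t\to 0^+$. The derivative bounds \eqref{e:scaling} follow from the chain rule through \eqref{e:ss-variables} together with smoothness and spatial decay of $\bar U,\Ulin,\Uper$. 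The Leray--Hopf energy inequality is then verified in the usual way, via mollification and integration by parts against the constructed force $f$, completing the construction.
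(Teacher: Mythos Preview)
Your overall architecture matches the paper's, but the first step contains a genuine gap. You claim that ``analytic perturbation theory'' transfers the unstable eigenvalue of $\LL$ to $\Lab/\beta = \LL + \beta^{-1}\Da$ because ``$\Da$ is relatively bounded against $-\LL$ with small bound,'' calling this routine. It is not: $\LL$ is a first-order operator with \emph{compactly supported} coefficients, while $\Da = -1 - \tfrac{1}{2\alpha}\xi\cdot\nabla + \fd^{\alpha}$ contains both an unbounded-coefficient transport term and a dissipation of order $2\alpha>1$. For $\Omega$ supported far from $\supp\bar\Omega$, one has $\LL\Omega=0$ while $\Da\Omega$ can be arbitrarily large, so $\Da$ is \emph{not} $\LL$-bounded, and Kato--Rellich type perturbation theory does not apply.

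The paper circumvents this by a different mechanism. It decomposes $\LL=\MM+\KK+\SS$ (transport, compact, bounded) and proves only \emph{strong} convergence of resolvents $R(\lambda,\beta^{-1}\Da+\MM+\SS)\to R(\lambda,\MM+\SS)$, obtained from an inviscid-limit energy estimate after undoing similarity variables. Strong convergence alone is insufficient for spectral stability, but composing with the \emph{compact} piece $\KK$ upgrades it to norm convergence of $R(\lambda,\beta^{-1}\Da+\MM+\SS)\KK$, and a factorization identity then shows the spectral projections $\prab$ converge in norm to $\pr$. A similar issue recurs in your second step: the semigroup bound $\|e^{\tau\Tab}\|\lesssim e^{(a+\delta)\tau}$ is not obtained by ``quantitative resolvent estimates'' alone but by computing the essential growth bound via the $\TT_{\alpha,0}$-compactness of $\Tab-\TT_{\alpha,0}$ (which again exploits the compact support of $\bar U$), so that only finitely many eigenvalues lie above $1-\tfrac{5}{4\alpha}$. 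Finally, a minor point: the paper uses $\alpha<5/4$ only at the very end to attain the initial data in $L^2$, not in the bilinear estimates as you suggest.
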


\begin{rem} 
The only place that $\alpha<5/4$ is used is in the last step  to attain the initial data via \eqref{e:scaling}. The proof also extends to any subcritical diffusion just below the $5/4$ exponent, for instance a logarithmically modified version of $\fd^{5/4}$. The condition $\alpha>1/2$ ensures that the diffusion is stronger than the effects from the material derivative, which we use in Lemma \ref{lem:unstable-vel-op-Tab}, Lemma \ref{lem:rel-cpct}, and Proposition \ref{p:contraction} below.\end{rem}
%
\begin{rem}
	The methods used in Vishik  and \ABC{}, and hence here, strongly rely on the equation having a forcing term that we can choose. In addition to relying on Vishik's unstable vortex, which only solves a forced Euler equation, this gives us the flexibility to linearise around vector fields that do not solve the unforced equations.
\end{rem}
\begin{rem}
Recently, Albritton and Colombo have shown \cite{https://doi.org/10.48550/arxiv.2209.08713} non-uniqueness for the hypodissipative (i.e. $\alpha < 1$ in our notation)  forced 2D Navier--Stokes equation, as announced in \cite{zbMATH07583008}. In addition, Albritton, Bru\'e and Colombo have recently shown \cite{https://doi.org/10.48550/arxiv.2209.03530} that the result of \cite{zbMATH07583008} extends to $\mathbb T^3$ and bounded domains of $\mathbb R^3$ when the no-slip boundary condition is enforced. One can also see the recent preprint \cite{https://doi.org/10.48550/arxiv.2209.12196} which shows well-posedness for sufficiently small forces in various senses, and initial data in the critical space $\textup{BMO}^{-1}$.
\end{rem}

\section{Linear instability}
To replace the heat semigroup estimates used in \cite{zbMATH07583008}, we need to recall the following basic estimate for the fractional heat equation. For given functions $u_0=u_0(x)$ and $f=f(x,t)$, we write $u=\fheat u_0$ for the solution to 
\[\del_tu + \fd^\alpha u= 0 ,\qquad u|_{t=0} = u_0, \]
and write $u=\fduham 0f$ where $\fduham0 f = \int_0^t \ee^{-(t-s)\fd^\alpha} (f(\cdot,s)) \dd s  $ is the solution to 
\[\del_tu + \fd^\alpha u= f ,\qquad u|_{t=0} = 0. \]
Then, the solution of the general initial value problem
\[\del_tu + \fd^\alpha u= f ,\qquad u|_{t=0} = u_0 \]
is the sum $u=\fheat u_0 + \fduham 0f.$
\begin{lem}
\label{lem:fracheat-est}
\cite[Lemma 3.1]{zbMATH05222116}  For $ 1 \le r \le p \le \infty$ and $\beta \ge 0$, \[\| \fd^{\beta/2}  \fheat u_0\|_{L^p} \lesssim t^{-\frac{3}{2\alpha}(\beta + \frac1r - \frac1p)}\|u_0\|_{L^r}. \]
\end{lem}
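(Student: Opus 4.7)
The plan is to realize the semigroup $\fheat$ as convolution against its kernel, reduce to an $L^q$ bound on that kernel via Young's convolution inequality, and extract the power of $t$ from the parabolic scaling of the fractional heat equation.

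First, I would write $\fheat u_0 = K_t \ast u_0$ with $K_t = \mathcal F^{-1}(\ee^{-t|\xi|^{2\alpha}})$. The Fourier dilation identity (with $\lambda = t^{1/(2\alpha)}$) gives the self-similarity
\[ K_t(x) = t^{-3/(2\alpha)} K_1(x/t^{1/(2\alpha)}), \]
and since $\fd^{\beta/2} K_t$ has Fourier transform $|\xi|^\beta \ee^{-t|\xi|^{2\alpha}}$, the same argument yields
\[ \fd^{\beta/2} K_t(x) = t^{-(\beta+3)/(2\alpha)}(\fd^{\beta/2} K_1)(x/t^{1/(2\alpha)}). \]

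Next, I would select $q \in [1,\infty]$ with $1 + 1/p = 1/q + 1/r$, which is possible because $r \le p$, and apply Young's convolution inequality,
\[ \|\fd^{\beta/2} \fheat u_0\|_{L^p} \le \|\fd^{\beta/2} K_t\|_{L^q} \|u_0\|_{L^r}. \]
A change of variables in the kernel $L^q$ norm, combined with the identity $1 - 1/q = 1/r - 1/p$, yields
\[ \|\fd^{\beta/2} K_t\|_{L^q} = t^{-\beta/(2\alpha) - \frac{3}{2\alpha}(1/r - 1/p)} \|\fd^{\beta/2} K_1\|_{L^q}, \]
which produces the required decay in $t$ as soon as $\|\fd^{\beta/2} K_1\|_{L^q}$ is finite.

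The remaining, and only nontrivial, step is to show that $\fd^{\beta/2} K_1 \in L^q(\mathbb R^3)$ for every $q \in [1,\infty]$ and $\beta \ge 0$. Its Fourier transform $|\xi|^\beta \ee^{-|\xi|^{2\alpha}}$ is rapidly decaying at infinity, so $\fd^{\beta/2}K_1$ is smooth, but it is only continuous rather than smooth at the origin for non-integer $\alpha$, which means $\fd^{\beta/2}K_1$ is not Schwartz and one must work with polynomial rather than exponential spatial decay. The standard route is Bochner's subordination principle for $\alpha \in (0,1)$, representing $K_1$ as a positive mixture of Gaussians against a one-sided stable density; for $\alpha \ge 1$ one resorts to the explicit oscillatory-integral representation together with a stationary-phase analysis. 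Either approach yields enough polynomial spatial decay to place $\fd^{\beta/2}K_1$ in every $L^q$, and this kernel estimate is the technical heart of the cited lemma.
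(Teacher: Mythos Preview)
The paper does not prove this lemma; it simply quotes it from the cited reference, so there is no argument to compare against. Your approach---write $\fheat$ as convolution with the self-similar kernel, apply Young's inequality, and reduce to $\fd^{\beta/2}K_1\in L^q$ for all $q$---is the standard one and is correct.

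One point worth flagging: your scaling computation actually produces the exponent
\[
t^{-\frac{\beta}{2\alpha}-\frac{3}{2\alpha}\bigl(\frac1r-\frac1p\bigr)},
\]
which is the correct rate, whereas the exponent $t^{-\frac{3}{2\alpha}(\beta+\frac1r-\frac1p)}$ printed in the statement appears to be a typographical slip (the two agree only when $\beta=0$). The paper only invokes the lemma qualitatively, so this does not affect anything downstream, but you should not claim that your computation ``produces the required decay in $t$'' as stated.
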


\begin{lem} \label{lem:densely-def-Lab}
$\Lab$ with the domain $D(\Lab)$ as in \eqref{e:defn-Lab} is a closed, densely defined operator.
\end{lem}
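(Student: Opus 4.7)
I split $\Lab = A + K$ with
\[ A\Omega := \Omega + \tfrac{1}{2\alpha}\xi\cdot\nabla_\xi\Omega - \fd^\alpha\Omega, \qquad K\Omega := \beta\LL\Omega, \]
and aim to show (i) $A$ is closed on $D_0 := D(\Lab)$, (ii) $K$ is $A$-bounded with $A$-bound zero, and then apply the Kato--Rellich perturbation theorem. Density of $D_0$ is the easier half: the space $\mathcal D := C^\infty_c(\mathbb R^3;\mathbb R^3)\cap L^2_{\aps}$ of smooth compactly supported axisymmetric pure-swirl divergence-free vector fields is dense in $L^2_{\aps}$ (truncation and mollification with a radial mollifier preserve both the divergence-free and pure-swirl structure), and each $\phi\in\mathcal D$ lies in $D_0$ since $\phi\in H^{2\alpha}$ and $\xi\cdot\nabla_\xi\phi$ has compact support.

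For closedness of $A$, I would realise $-A$ as the generator of a strongly continuous semigroup on $L^2_{\aps}$. Since $\xi\cdot\nabla_\xi + \tfrac{3}{2}$ is skew-symmetric, integration by parts yields
\[ \Re\langle -A\Omega,\Omega\rangle_{L^2} \ge \|\fd^{\alpha/2}\Omega\|_{L^2}^2 - C\|\Omega\|_{L^2}^2, \]
so $-A + C$ is accretive. The semigroup can be constructed explicitly from the $L^2$-isometric dilation flow $(R_t\Omega)(\xi) := e^{3t/(4\alpha)}\Omega(e^{t/(2\alpha)}\xi)$ combined with the fractional heat semigroup via the intertwining $\fd^\alpha R_t = e^t R_t \fd^\alpha$ (a direct consequence of the $2\alpha$-homogeneity of $\fd^\alpha$). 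The axisymmetric pure-swirl structure is preserved by $R_t$ and by $\fheat$ (both commute with rotations), so the resulting semigroup acts on $L^2_{\aps}$; Hille--Yosida then identifies the generator's domain as $D_0$, giving closedness of $A$.

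For the $A$-boundedness of $K$, expanding
\[ -\LL\Omega = \bar U\cdot\nabla\Omega - \Omega\cdot\nabla\bar U + U\cdot\nabla\bar\Omega - \bar\Omega\cdot\nabla U, \]
the last three terms are bounded in $L^2$ by $\|\Omega\|_{L^2}$, using compact support of $\nabla\bar U, \bar\Omega, \nabla\bar\Omega$ together with the Biot--Savart estimates $\|U\|_{L^6} + \|\nabla U\|_{L^2}\lesssim \|\Omega\|_{L^2}$ (Hardy--Littlewood--Sobolev plus Calder\'on--Zygmund). The transport term $\bar U\cdot\nabla\Omega$ needs $\nabla\Omega$ only on $\supp\bar U$, and since $2\alpha > 1$, Gagliardo--Nirenberg interpolation gives $\|\nabla\Omega\|_{L^2}\le \epsilon\|\fd^\alpha\Omega\|_{L^2} + C_\epsilon\|\Omega\|_{L^2}$ for any $\epsilon>0$. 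Hence $K$ has $A$-bound zero, and Kato--Rellich concludes closedness of $\Lab = A+K$ on $D_0$. The main obstacle is the first step: pinning down the semigroup formula for $-A$ carefully and verifying that its generator's domain coincides with $D_0$; once this is in hand, the rest is essentially formal given $\alpha > 1/2$ and the smoothness and compact support of $\bar U, \bar\Omega$.
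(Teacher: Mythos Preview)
Your approach is correct and takes a genuinely different (and in some ways cleaner) route than the paper. The paper argues closedness by showing directly that the principal-part equation $-\frac{1}{2\alpha}\xi\cdot\nabla\Omega + \fd^\alpha\Omega = Z$ is uniquely solvable with $\Omega\in H^{2\alpha}$ for every $Z\in L^2$; it does this by the change of variables $h(x,t)=\Omega(x/t^{1/2\alpha})$, $g(x,t)=t^{-1}Z(x/t^{1/2\alpha})$, which converts the equation into the fractional heat equation $\del_t h + \fd^\alpha h = g$ with zero data at $t=0$, and then reads off $\Omega=h(\cdot,1)\in H^{2\alpha}$ from Lemma~\ref{lem:fracheat-est}. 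The perturbation by $\beta\LL$ is handled only by citing that $\LL$ is closed, which is terser than your Kato--Rellich argument.

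Your decomposition $\Lab=A+K$ with $K$ shown to have $A$-bound zero via the interpolation $\|\nabla\Omega\|_{L^2}\le\epsilon\|\fd^\alpha\Omega\|_{L^2}+C_\epsilon\|\Omega\|_{L^2}$ (valid since $2\alpha>1$) is a more transparent treatment of the perturbation. Note one small slip: from $\Re\langle -A\Omega,\Omega\rangle\ge\|\fd^{\alpha/2}\Omega\|^2-C\|\Omega\|^2$ you conclude that $-A+C$ is accretive, which by Lumer--Phillips means $A$ (not $-A$) is the generator; this does not affect closedness. The obstacle you flag---identifying the generator's domain with $D_0$---is precisely what the paper's similarity-variable computation resolves: your intertwined semigroup $e^{tA}$ is, up to a constant factor, the map $\Omega\mapsto h(\cdot,e^t)$ in the paper's notation, and the resolvent estimate $\|\fd^\alpha\Omega\|_{L^2}\lesssim\|Z\|_{L^2}$ together with $\xi\cdot\nabla\Omega = 2\alpha(Z-\fd^\alpha\Omega)\in L^2$ pins down the domain. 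So the two arguments share the same analytic core (the dilation--heat intertwining), but yours packages the perturbation step more robustly via Kato--Rellich, while the paper's gives the explicit solution formula that settles the domain question.
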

\begin{proof}
If $\Omega_n\in D(\Lab)$ converges to $\Omega$ and $\Lab\Omega\in L^2$ converges to $W$, then $W=\Lab \Omega$ in the sense of distributions. Since $\LL$ is closed, in order to show that $\Lab$ is closed, we only  need to show that each $Z\in L^2$ corresponds to a unique solution $\Omega \in H^{2\alpha}$ to the equation $(-\Lab +1 + \beta \LL )\Omega = Z$, i.e.
\[ -\frac1{2\alpha}\xi\cdot\nabla \Omega +\fd^{\alpha}\Omega = Z. \]
We deal with the unbounded term here and in the sequel by using the fact that transforming back to physical variables by $\xi = x/t^{1/2\alpha }$ converts $\xi\cdot\nabla_\xi $ into a time derivative. Specifically, we consider the functions
\begin{align}
	h(x,t)=\Omega(x/t^{1/2\alpha}), \quad g(x,t)= \frac1tZ(x/t^{1/2\alpha}).\label{e:heat-transform}
\end{align}
It is easy to check that $h$ solves the fractional heat equation 
\[ \del_t h + \fd^{\alpha} h  = g,\]
and attains zero initial data in $L^2$ since $\alpha>0$:
\[ \|h\|_{L^2} = t^{3/4\alpha } \|\Omega\|_{L^2} \xrightarrow[t\to0]{}0.\]
Since $\|g\|_{L^2}=t^{-1+3/4\alpha } \|Z\|_{L^2}$ is integrable, we obtain via the theory of the fractional heat equation 
 that $\Omega=h(\cdot,1)\in H^{2\alpha}$. Indeed, firstly, $h(\cdot,1/2)$ is $L^2$ by Lemma \ref{lem:fracheat-est}. Then for times $t\ge 1/2$, by writing 
\[ h(x,t) = \fheat h(x,1/2) + \fduham{0}(g(\cdot,t+1/2))(x,t-1/2), \]
since $g(\cdot,t+1/2)$ is not singular at $t=0$, Lemma \ref{lem:fracheat-est} shows that $\fd^\alpha h \in C([1/2,1];L^2)$, so we obtain the claimed result. Hence, $\Lab$ is closed.
\end{proof}

We write  $\LLab =\frac1\beta \Da+ \MM+\KK+ \SS $ as a dissipative term, main term, compact term, and small term respectively, where:
\begin{align*}
	-\Da \Omega &\coloneq -\frac{3}{4\alpha}\Omega - \frac1{2\alpha}\xi\cdot\nabla \Omega + \fd^{\alpha} \Omega ,\\
	-\MM\Omega&\coloneq \bar U \cdot\nabla\Omega ,\\
	-\KK\Omega&\coloneq U\cdot\nabla\bar \Omega ,\\
	-\SS\Omega&\coloneq \bar \Omega \cdot\nabla U + \Omega \cdot\nabla \bar U.
\end{align*}
Then $\Lab = \beta \LLab + 1 - \frac3{4\alpha } $. The constant $1-\frac3{4\alpha}$ is added because 
$\xi\cdot\nabla \Omega$ is not skew-adjoint.

\begin{lem}[Resolvent estimates via inviscid limit]
For all $\beta>0$ and all $\lambda$ with $\Re\lambda>\mu\coloneq \|\SS\|_{L^2_\aps \to L^2}$, 
\begin{align}
    \|R(\lambda,\beta^{-1}\Da +\MM + \SS)\|_{L^2_{\aps}\to L^2_{\aps}} \le \frac1{\Re \lambda-\mu},\label{e:resolvent-bd}
\end{align}
and the resolvent of $\beta^{-1}\Da +\MM + \SS$ converges in the strong topology, i.e. for all $\Omega_0\in L^2_{\aps}$,
\[R(\lambda, \beta^{-1}\Da +\MM + \SS) \Omega_0 \xrightarrow[\beta\to\infty]{} R(\lambda, \MM + \SS)\Omega_0. \]
\end{lem}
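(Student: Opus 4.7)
The plan is to reduce \eqref{e:resolvent-bd} to a dissipativity estimate in $L^2_\aps$. Integrating by parts and using $\div\xi = 3$, one has $\langle\xi\cdot\nabla\Omega,\Omega\rangle = -\tfrac32\|\Omega\|_{L^2}^2$, so
\[\Re\langle -\Da\Omega,\Omega\rangle = -\tfrac{3}{4\alpha}\|\Omega\|_{L^2}^2 +\tfrac{3}{4\alpha}\|\Omega\|_{L^2}^2 + \|\fd^{\alpha/2}\Omega\|_{L^2}^2 = \|\fd^{\alpha/2}\Omega\|_{L^2}^2\ge 0,\]
which is exactly the reason the constant $-\tfrac{3}{4\alpha}$ was absorbed into $\Da$. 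Combined with the skew-symmetry of $\MM=-\bar U\cdot\nabla$ (which follows from $\div\bar U=0$) and the elementary bound $\Re\langle\SS\Omega,\Omega\rangle\le\mu\|\Omega\|_{L^2}^2$, I would conclude
\[\Re\langle(\beta^{-1}\Da+\MM+\SS)\Omega,\Omega\rangle\le\mu\|\Omega\|_{L^2}^2,\]
and Cauchy-Schwarz then yields $\|(\lambda-\beta^{-1}\Da-\MM-\SS)\Omega\|_{L^2}\ge(\Re\lambda-\mu)\|\Omega\|_{L^2}$ for all $\Re\lambda>\mu$.

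To convert this a priori estimate into the asserted bound, I must verify that $\{\Re\lambda>\mu\}$ actually lies in the resolvent set. The operator $\beta^{-1}\Da + \MM + \SS$ is closed and densely defined by the argument of Lemma \ref{lem:densely-def-Lab}, which adapts once one observes that $\MM$ and $\SS$ are relatively bounded by $\Da$: the smoothness and compact support of $\bar U$ and $\bar\Omega$, together with $H^{2\alpha}\hookrightarrow H^1$ (valid since $\alpha>1/2$), give the needed control. The a priori estimate already forces injectivity and closed range, so it suffices to exhibit one $\lambda_0$ with $\Re\lambda_0>\mu$ in the resolvent set; this I would do by a perturbative fixed-point argument starting from $\lambda_0-\beta^{-1}\Da$, whose invertibility on $L^2_\aps$ follows from reduction to the fractional heat equation via \eqref{e:heat-transform}, treating $\MM+\SS$ as a relatively controlled perturbation.

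For the strong convergence, my plan is to use the second resolvent identity
\[R(\lambda,\beta^{-1}\Da+\MM+\SS)-R(\lambda,\MM+\SS)=R(\lambda,\beta^{-1}\Da+\MM+\SS)\cdot \beta^{-1}\Da\cdot R(\lambda,\MM+\SS),\]
valid once both resolvents exist (existence of $R(\lambda,\MM+\SS)$ by an analogous dissipativity argument applied to the first-order operator). Applied to $\Omega_0\in L^2_\aps$, the uniform bound \eqref{e:resolvent-bd} reduces the task to showing $\beta^{-1}\Da R(\lambda,\MM+\SS)\Omega_0\to 0$ strongly in $L^2_\aps$ as $\beta\to\infty$. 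This is immediate on the subclass $\mathcal D\subset L^2_\aps$ of $\Omega_0$ for which $R(\lambda,\MM+\SS)\Omega_0\in D(\Da)$, because then $\Da R(\lambda,\MM+\SS)\Omega_0$ is a fixed $L^2$-vector and the $\beta^{-1}$ provides decay; the uniform bound \eqref{e:resolvent-bd} then extends the convergence to all of $L^2_\aps$ by density. The main obstacle is exhibiting $\mathcal D$ explicitly and verifying its density — I plan to take $\Omega_0$ smooth, compactly supported, and axisymmetric pure-swirl, and use propagation of regularity along the flow of $\bar U$ to lift the required $H^{2\alpha}$ regularity through $R(\lambda,\MM+\SS)$, which is the step where the smoothness and compact support of $\bar U$ and $\bar\Omega$ are essential.
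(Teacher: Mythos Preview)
Your dissipativity route to \eqref{e:resolvent-bd} is correct and in fact more direct than the paper's. The paper instead derives \eqref{e:resolvent-bd} from the semigroup growth bound $\|\ee^{\tau(\beta^{-1}\Da+\MM+\SS)}\Omega_0\|_{L^2}\le \ee^{\mu\tau}\|\Omega_0\|_{L^2}$ (obtained by an energy estimate on the evolution) via the Laplace representation $R(\lambda,\AA)=\int_0^\infty \ee^{-\lambda\tau}\ee^{\tau\AA}\dd\tau$. Your computation of the real part bypasses the semigroup entirely; both yield the same constant.

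For the strong convergence, however, your second-resolvent-identity argument has a real gap. To apply the identity you need $W\coloneq R(\lambda,\MM+\SS)\Omega_0\in D(\Da)$, i.e.\ $W\in H^{2\alpha}$ with $\xi\cdot\nabla W\in L^2$. Writing $W=\int_0^\infty \ee^{-\lambda\tau}\Omega(\tau)\dd\tau$ with $\Omega$ solving the transport-type problem $\partial_\tau\Omega+\bar U\cdot\nabla\Omega=\SS\Omega$, convergence of this integral in $H^{2\alpha}$ (or in the weighted space) requires $\|\Omega(\tau)\|_{H^{2\alpha}}$ to grow no faster than $\ee^{\Re\lambda\,\tau}$. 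Propagation of regularity along the flow of $\bar U$ only gives $\|\Omega(\tau)\|_{H^s}\lesssim \ee^{c_s\tau}\|\Omega_0\|_{H^s}$ with a rate $c_s$ governed by $\|\bar U\|_{C^{s+1}}$, which is in general strictly larger than $\mu=\|\SS\|_{L^2_\aps\to L^2}$. So your density argument only covers $\Re\lambda>c_{2\alpha}$, not the full half-plane $\Re\lambda>\mu$ asserted in the lemma. (This is repairable: the maps $\lambda\mapsto R(\lambda,\beta^{-1}\Da+\MM+\SS)\Omega_0-R(\lambda,\MM+\SS)\Omega_0$ are analytic and, by \eqref{e:resolvent-bd}, locally uniformly bounded on $\{\Re\lambda>\mu\}$, so Vitali's theorem extends the convergence from a sub-half-plane to the whole region --- but this step is missing from your plan.)

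The paper sidesteps the issue by proving convergence at the \emph{semigroup} level first. For smooth compactly supported data it shows $\ee^{\tau(\beta^{-1}\Da+\MM+\SS)}\Omega_0\to \ee^{\tau(\MM+\SS)}\Omega_0$ in $L^\infty(0,T;L^2)$ via an energy estimate on the difference $\tilde\Omega$, in which the dangerous term $\beta^{-1}\langle\Da\Omega,\tilde\Omega\rangle$ is treated as an $H^{-\alpha}$--$H^\alpha$ duality pairing with the $H^\alpha$ piece absorbed by the dissipation. This requires only $\Da\Omega(\tau)\in H^{-\alpha}$ on \emph{finite} time intervals --- automatic for smooth $\Omega$ --- and the Laplace transform then yields resolvent convergence, with the tail $\int_T^\infty$ controlled by the $L^2$ growth rate $\mu$ alone. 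The decoupling of the regularity requirement (finite time, $H^{-\alpha}$) from the growth requirement ($L^2$, rate $\mu$) is precisely what your resolvent-identity approach fails to achieve.
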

\begin{proof} 
We want to use that the Laplace transform of the semigroup is the resolvent (see \eqref{e:resolvent-laplace} below), so let $\Omega^\beta$ be the solution to the initial value problem $(\del_\tau - \beta^{-1}\Da-\MM-\SS)\Omega =0$, i.e. 
\[\left\{\begin{alignedat}{-1}\del_\tau \Omega^\beta - \frac1\beta \Big(\frac3{4\alpha}+\frac1{2\alpha}\xi\cdot\nabla -\fd^{\alpha}\Big)\Omega ^\beta + \bar U \cdot\nabla \Omega ^\beta  &=  \SS\Omega ^\beta,\\ \Omega ^\beta |_{\tau=0}&=\Omega_0,\end{alignedat}\right.\]
and let $\Omega$ solve the $\beta\to\infty$ limit equation
\[\left\{\begin{alignedat}{-1}\del_\tau \Omega + \bar U \cdot\nabla \Omega &=  \SS\Omega ,\\ \Omega  |_{\tau=0}&=\Omega_0,\end{alignedat}\right.\]
 For \emph{smooth, compactly supported} data, $\Omega$ exists and is smooth. $\Omega^\beta$ can be shown to exist as a strong solution in $C^0 H^{2\alpha}$ by again undoing the similarity variable transform which removes the term $\xi\cdot\nabla$ to get a more standard fractional heat equation with drift, similarly to \eqref{e:heat-transform}.
Specifically, one sets $h^\beta(x,t)=\Omega^\beta (xt^{-1/2\alpha},\beta \log t)$ and  $f^\beta (x,t)=\beta t^{-1}\SS\Omega^\beta(xt^{-1/2\alpha},\beta \log t)$ to get
\[ \del_t h^\beta  -\left(\frac3{4\alpha} - \fd^\alpha \right ) h^\beta + \beta \bar u\cdot \nabla h^\beta  = f^\beta. \]
Notably, $\bar u$ is smooth in both space and time, as the initial data is prescribed at $t=1$, so well-posedness is easily proven first for $f^\beta=0$, then for $f^\beta\neq 0$ with Duhamel's formula.

From the energy inequality for $\Omega^\beta$, and from the Lagrangian formulation for $\Omega$, we have the  estimates
\begin{align}
\|\Omega\|_{L^2}^2  + \|\Omega^\beta\|_{L^2}^2 \lesssim \ee^{\tau\smash { \|\SS\|_{L^2_{\aps}\to L^2}} }\|\Omega_0\|^2_{L^2}=e^{\tau \mu }\|\Omega_0\|^2_{L^2}.\label{e:growth-bound}
\end{align}
For the  limit, we study the equation for the difference $\tilde\Omega\coloneq \Omega^\beta - \Omega$,
\[(\del_\tau -\beta^{-1} \Da -\MM -\SS)\tilde\Omega = \beta^{-1}\Da \Omega,\] initially for  $\Omega_0\in C^\infty_c$. Testing against $\tilde\Omega$ in $L^2$, and using Young's inequality ($ab\le a^2/2 + b^2/2$) gives
\begin{multline}
	\frac{\dd}{\dd\tau} \|\tilde\Omega\|_{L^2}^2 + \frac1\beta \|\fd^{\alpha/2}\tilde\Omega\|_{L^2}^2 \le \mu \|\tilde \Omega \|_{L^2}^2 + \frac1\beta \langle\Da\Omega,\tilde \Omega\rangle_{H^{-\alpha},H^\alpha }
\\
\le \mu\|\tilde\Omega\|_{L^2}^2 + \frac 1{2\beta} \|\Da\Omega\|_{H^{-\alpha}}^2 + \frac1{2\beta}\Big( \|\tilde\Omega\|_{L^2}^2 +\|\fd^{\alpha/2} \tilde\Omega \|_{L^2}^2\Big).
\end{multline}
Absorbing $\frac1{2\beta}\|\fd^{\alpha/2} \tilde\Omega \|_{L^2}^2$ into the left-hand side, integrating in $0\le \tau \le T$ and using that $\tilde\Omega|_{\tau=0}=0$, we obtain for each fixed $T$ that 
\begin{align}
    \|\tilde\Omega\|_{L^\infty(0,T;L^2)}^2 \lesssim_T \beta^{-1}\sup_{t\le T} \|\Da\Omega(\cdot,t)\|_{H^{-\alpha}}^2 \xrightarrow[\beta\to\infty]{}0. \label{e:inviscid-limit}
\end{align}
Recall that for a densely defined operator $\AA$ with sufficient growth bounds on the semigroup $e^{t\AA}$, the resolvent $R(\lambda,\AA)$     can be written as the Laplace transform 
\begin{align}
     R(\lambda,\AA)\Omega_0 = \int_0^\infty \ee^{-t (\lambda-\AA)} \Omega_0 \dd t. \label{e:resolvent-laplace}
\end{align}
In our case, this formula and the growth bound \eqref{e:growth-bound} shows the boundedness \eqref{e:resolvent-bd}.
It follows that for all $\lambda$ with $\Re\lambda>\mu$, \begin{align*}
	 \MoveEqLeft\|R(\lambda,\beta^{-1}\Da + \MM + \SS)\Omega_0 -  R(\lambda,  \MM + \SS)\Omega_0\|_{L^2}  \\&\le \int_0^\infty \ee^{-\Re \lambda t} \|
	\ee^{t(\beta^{-1}\Da + \MM + \SS)} \Omega_0 - \ee^{t( \MM + \SS)} \Omega_0  \| \dd t \\
	 &\le \|\tilde\Omega\|_{L^\infty(0,T; L^2)}\int_0^T\ee^{-\Re \lambda t}\dd t + \int_T^\infty \ee^{-\Re \lambda t}(\|\Omega(t)\|_{L^2} + \|\Omega^\beta(t)\|_{L^2}) \dd t
	 \\
	 	 &\le  \frac1{\Re\lambda} \|\tilde\Omega\|_{L^\infty(0,T; L^2)}e^{-\Re\lambda T} + \frac1{\Re\lambda-\mu}\|\Omega_0\|_{L^2}e^{-(\Re\lambda-\mu)T}.\end{align*} 
The strong convergence now follows from \eqref{e:inviscid-limit} for all compactly supported data, by first taking $T\gg1$ and then $\beta\to \infty$. We can then cover the case  $\Omega_0\in L^2$ by density. \end{proof}
\begin{lem}
	Let $\lambda_\infty$ be an unstable eigenvalue of $\LL$. Then, for each $\epsilon\in (0,\Re\lambda_\infty - \mu)$, there is $\beta_0$  such that for all $\beta>\beta_0$, $\Lab$ has an unstable eigenvalue $\lab=\beta \llab+(1-\frac3{4\alpha})$, with $|\lambda_\infty - \llab|<\epsilon$. 
\end{lem}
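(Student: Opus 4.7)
\emph{Proof plan.} The plan is to use a Riesz spectral projection argument. I would draw a small contour $\gamma$ of radius less than $\epsilon$ centered at $\lambda_\infty$ and lying in the resolvent set of $\LL$, then establish uniform operator-norm convergence $R(\lambda, \LLab) \to R(\lambda, \LL)$ on $\gamma$. This yields norm convergence of the corresponding Riesz projections, and since the limit projection is non-trivial (because $\lambda_\infty$ is an eigenvalue of $\LL$ enclosed by $\gamma$), the same holds for the projection associated to $\LLab$ for large $\beta$, producing an eigenvalue of $\LLab$ inside $\gamma$.

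Write $A_\beta := \frac{1}{\beta}\Da + \MM + \SS$ and $A_\infty := \MM + \SS$, so that $\LLab = A_\beta + \KK$ and $\LL = A_\infty + \KK$. The previous lemma furnishes strong resolvent convergence $R(\lambda, A_\beta) \to R(\lambda, A_\infty)$ together with the uniform bound $(\Re\lambda-\mu)^{-1}$ on $\{\Re\lambda>\mu\}$. Since $\KK$ is (relatively) compact by Lemma \ref{lem:rel-cpct}, Weyl's theorem on the invariance of essential spectrum under compact perturbation gives $\sigmaess(\LL) \subseteq \sigmaess(A_\infty) \subseteq \{\Re\lambda \le \mu\}$, so $\lambda_\infty$ is automatically isolated of finite multiplicity, and one may choose $\gamma$ in $\rho(\LL) \cap \{\Re\lambda > \mu\}$ of radius less than $\epsilon$, enclosing only $\lambda_\infty$.

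The crux is the norm convergence on $\gamma$. From the resolvent identity
\[ R(\lambda, A_\beta + \KK) = \bigl(I - R(\lambda, A_\beta)\KK\bigr)^{-1} R(\lambda, A_\beta), \]
this reduces to the norm convergence $R(\lambda, A_\beta)\KK \to R(\lambda, A_\infty)\KK$, which is a consequence of the classical fact that strong convergence composed on the right with a compact operator yields norm convergence (since $\KK$ sends the unit ball to a precompact set, on which strongly convergent uniformly bounded sequences converge uniformly). Uniform invertibility of $I - R(\lambda, A_\beta)\KK$ on $\gamma$ for large $\beta$ then follows by a Neumann-series argument from the invertibility of $I - R(\lambda, A_\infty)\KK$, which holds since $\lambda \in \rho(\LL)$.

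Consequently the Riesz projections
\[ P_\beta := \frac{1}{2\pi\ii}\oint_\gamma R(\lambda, \LLab)\dd\lambda, \qquad P_\infty := \frac{1}{2\pi\ii}\oint_\gamma R(\lambda, \LL)\dd\lambda \]
satisfy $P_\beta \to P_\infty$ in norm, and $P_\infty \ne 0$ since $\lambda_\infty$ is an enclosed eigenvalue of $\LL$. Hence $P_\beta \ne 0$ for $\beta$ large, yielding an eigenvalue $\llab$ of $\LLab$ with $|\llab - \lambda_\infty| < \epsilon$. Setting $\lab := \beta\llab + (1 - \frac{3}{4\alpha})$ produces the claimed eigenvalue of $\Lab$, and $\Re\lab > 0$ for $\beta$ large because $\Re\llab > \Re\lambda_\infty - \epsilon > \mu \ge 0$. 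The principal technical hurdle is this norm-convergence upgrade: strong resolvent convergence alone would not preserve non-trivial spectral projections, and it is the compactness of $\KK$ that makes the argument work.
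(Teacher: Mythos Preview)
Your argument is correct and follows essentially the same route as the paper: a Riesz-projection argument on a small contour $\gamma$ around $\lambda_\infty$, using the factorization through $I - R(\lambda,A_\beta)\KK$ and the upgrade from strong to norm convergence via compactness of $\KK$. One minor correction: your citation of Lemma~\ref{lem:rel-cpct} is misplaced, since that lemma concerns the velocity-side operator $\KK_{\alpha,\beta}=\Tab-\TT_{\alpha,0}$, whereas the $\KK$ here is the vorticity-side operator $-\KK\Omega=U\cdot\nabla\bar\Omega$; its compactness on $L^2_{\aps}$ is a separate (though equally standard) fact relying on the compact support of $\bar\Omega$ and compact Sobolev embedding.
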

\begin{proof}

	To prove this, we let $\gamma\subset \rho(\LL)\cap \{\Re\lambda>\mu \}$ be a sufficiently small circle around the unstable eigenvalue $\lambda_\infty$ which encloses no other part of $\sigma(\LL)$. We need to show that we can define the spectral projection operator
	\begin{align}
	     \prab \coloneq \frac1{2\pi i} \int_{\gamma} R(\lambda,\LLab)\dd \lambda. \label{e:prab}
	\end{align}
	and show it converges in norm to the corresponding spectral projection for $\LL=\MM+\SS+\KK$,
		\begin{align}
	     \pr \coloneq \frac1{2\pi i} \int_{\gamma} R(\lambda,\LL)\dd \lambda. \label{e:pr}
	\end{align}
	From Lemma \ref{l:abc}, $\pr$ is non-trivial, so $\prab$ is nontrivial for sufficiently large $\beta$. This means that $\gamma$ encloses an eigenvalue of $\Lab$ with finite multiplicity, which proves the Lemma.
	
	So we just need to justify \eqref{e:prab}. Consider the identity
	\[ \lambda - \LLab = (\lambda - \beta^{-1}\Da-\MM-\SS)\big(I-R(\lambda , \beta^{-1} \Da +\MM+\SS)\KK \big),\]   
	which makes sense for $\Re\lambda>\mu$ by \eqref{e:resolvent-bd}. As $R(\lambda , \beta^{-1} \Da +\MM+\SS)\to R(\lambda , \MM+\SS)$ in the strong topology and $\KK$ is compact, we have the norm convergence
	\[ R(\lambda , \beta^{-1} \Da +\MM+\SS) \KK \xrightarrow[\beta\to\infty ]{} R(\lambda , \MM+\SS)\KK, \]
	locally uniformly in $\lambda$.
	In addition,  for $\lambda\in \rho(\LL)\cap \{\Re\lambda>\mu \} \subset \rho(\MM+\SS)$, the identity
	\[ \lambda-\LL=\lambda -(\MM+\SS+\KK) = (\lambda- \MM+\SS)(I-R(\lambda,\MM+\SS)\KK)\]
	shows that $I-R(\lambda,\MM+\SS)\KK$ is invertible. As the set of invertible operators is open, 
	it follows that $I-R(\lambda , \beta^{-1} \Da +\MM+\SS)\KK $ is invertible for $\beta\gg1$, locally uniformly in $\lambda$. Hence, $R(\lambda, \LLab)$ for $\beta\gg1$ makes sense on the compact set $\gamma$, which verifies \eqref{e:prab}. This finishes the proof. 
\end{proof}

\begin{lem}\label{lem:unstable-vel-op-Tab}
For $\beta\gg1$, the linearised Navier--Stokes velocity operator in similarity variables,
\begin{align}
\begin{alignedat}{-1}
	\Tab&: D(\Tab)\subset L^2\to L^2 , \\   D(\Tab)&\coloneq \{ U\in H^{2\alpha} : \xi\cdot\nabla U\in L^2\} ,
	 \\
	-\Tab U &\coloneq	 \Big(\frac1{2\alpha}-1 - \frac1{2\alpha}\xi\cdot\nabla_\xi \Big)U + \beta \mathbb P(\bar U\cdot\nabla_\xi U + U\cdot\nabla_\xi \bar U)  \\&\quad  + \fd^{\alpha} U ,
\end{alignedat}\label{e:defn-Tab}
\end{align}
has the same unstable eigenvalue $\lab $ as $\Lab$. Here, $\mathbb P$ denotes the usual Leray projector to the space of divergence-free fields.
\end{lem}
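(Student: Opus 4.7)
The plan is to produce the eigenvector of $\Tab$ from that of $\Lab$ via the Biot--Savart operator and verify the eigenvalue equation by taking curls. Let $\Omega \in D(\Lab)$ satisfy $\Lab \Omega = \lab \Omega$, and set $U \coloneq \BS \Omega$; since $\Omega \in L^2_{\aps}$ is axisymmetric pure-swirl, $U$ is divergence-free, axisymmetric without swirl, and $\nabla \times U = \Omega$.

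The main technical obstacle is showing $U \in D(\Tab)$, and within that, $U \in L^2(\mathbb R^3)$: Biot--Savart does not map $L^2(\mathbb R^3)$ into itself in three dimensions. I would extract this from the eigenvalue equation
\[
\bigl(\fd^{\alpha} - \tfrac{1}{2\alpha}\xi \cdot \nabla + 1 - \lab\bigr)\Omega = \beta \LL \Omega,
\]
whose right-hand side involves only the compactly supported $\bar\Omega$ together with the decaying $\bar U$, paired with $\Omega$ and $U$ (note $U \in L^6$ by Hardy--Littlewood--Sobolev from $\Omega \in L^2$, so the right-hand side lies in $L^2$ without a priori assuming $U \in L^2$). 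Trading the drift $\xi \cdot \nabla$ for a time derivative via the similarity-to-physical transformation of Lemma \ref{lem:densely-def-Lab} and then inverting by the fractional heat kernel (Lemma \ref{lem:fracheat-est}) should yield decay estimates on $\Omega$ sufficient to place it in some $L^p$ with $p < 3/2$, whence $U \in L^2$. The hypothesis $\alpha > 1/2$ is used here to ensure that the fractional diffusion dominates the first-order drift. Once $U \in L^2$, the one-derivative gain of $\BS$ gives $U \in H^{2\alpha+1}$, and $\xi \cdot \nabla U \in L^2$ follows from $\xi \cdot \nabla \Omega \in L^2$ via the scaling commutator identity $[\xi \cdot \nabla, \BS]\Omega = \BS \Omega$ (a consequence of the $-2$-homogeneity of the Biot--Savart kernel).

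With $U \in D(\Tab)$ in hand, I verify $\Tab U = \lab U$ by showing $\nabla \times (\Tab U - \lab U) = 0$ and invoking the Helmholtz decomposition in $L^2(\mathbb R^3)$. The vector identities $\nabla \times \mathbb P = \nabla \times$, $\nabla \times (\xi \cdot \nabla U) = \xi \cdot \nabla \Omega + \Omega$, $\nabla \times \fd^{\alpha} U = \fd^{\alpha} \Omega$, together with
\[
\nabla \times (\bar U \cdot \nabla U + U \cdot \nabla \bar U) = \bar U \cdot \nabla \Omega - \Omega \cdot \nabla \bar U + U \cdot \nabla \bar \Omega - \bar \Omega \cdot \nabla U = -\LL \Omega,
\]
(expanding $a \cdot \nabla b + b \cdot \nabla a = \nabla(a \cdot b) - a \times (\nabla \times b) - b \times (\nabla \times a)$ and using $\nabla \times (a \times b) = b \cdot \nabla a - a \cdot \nabla b$ for divergence-free $a,b$) combine to give $\nabla \times \Tab U = \Lab \Omega = \lab \Omega = \lab \nabla \times U$. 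Therefore $\Tab U - \lab U$ is divergence-free, curl-free, and in $L^2(\mathbb R^3)$, so each component is a harmonic $L^2$ function and hence identically zero, yielding $\Tab U = \lab U$.
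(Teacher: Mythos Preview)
Your approach is essentially the paper's: define $\eta=\BS\zeta$ from the $\Lab$-eigenfunction $\zeta$, use the curl conjugation $\nabla\times\Tab=\Lab\,\nabla\times$ for the algebra, and isolate $\eta\in L^2$ as the only nontrivial membership condition, which is obtained by undoing the similarity variables so that $\zeta$ solves a fractional heat equation with compactly supported forcing $\LL\zeta$. One small correction: ``some $L^p$ with $p<3/2$'' is not quite enough (that only gives $U\in L^q$ for some $q<3$); you need $p\le 6/5$, and in fact the paper simply shows $\zeta\in L^1$ (so $\BS\zeta\in L^{3/2}$) and interpolates with $\BS\zeta\in L^6$ to land in $L^2$.
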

\begin{proof}
	Algebraically, this is true because the operators in velocity and vorticity formulation are linked by conjugation with the curl operator,
	\[\nabla\times \Tab V = \Lab \nabla\times V. \]
	Hence, $\Tab$ has the same unstable eigenvalue as $\Lab$, and if $\zeta$ is the eigenfunction for $\Lab$, then $\eta=\BS\zeta$ is formally the eigenfunction for $\Tab$. The only detail to check is that $\eta \in D(\Tab)$, and the conditions defining $D(\Tab)$ are all satisfied save for potentially $\eta\in L^2$. As $\BS$ is bounded from $L^2$ to $L^6$, and from $L^1$ to $L^{3/2}$, it suffices by interpolation to show that $\zeta\in L^1$. 

As an eigenfunction of $\Lab$, $\zeta$ satisfies the equation
\[ \lab \zeta - \Big( 1 + \frac1{2\alpha}\xi\cdot\nabla - \fd^\alpha \Big)\zeta = \LL \zeta. \]
We again undo the similarity variables by setting $\xi = x/t^{1/2\alpha}$ and 
\[ h(x,t) \coloneq t^{\lab -1} \zeta \Big(\frac x{t^{1/2\alpha}}\Big), \quad g(x,t) \coloneq t^{\lab-2} (\LL\zeta)\Big(\frac x{t^{1/2\alpha}}\Big).\]
The corresponding equation is
\[\del_t h + \fd^\alpha h =g, \quad h|_{t=0} =0,\]
where the initial data is attained in $L^2$ once $\Re \lab > 1-\frac3{4\alpha}$, using $\beta\gg1$. Observe that $\LL \zeta\in L^2$ is compactly supported. It follows from the integral formulation of the Duhamel-type operator $\mathcal G_0$ that $g\in C^0([0,T]; L^1)$ for each $T$ (again using $\beta\gg 1$). Lemma \ref{lem:fracheat-est}  shows that $h$ is also $C^0L^1$, so in particular $\zeta = h(\cdot,1)\in L^1$, as needed.
\end{proof}

\section{Nonlinear construction}

\subsection[The semigroup generated by Talphabeta]{The semigroup generated by $\Tab$}
Here, we adapt the arguments of \cite[\S 2]{zbMATH06433355} to prove some results for the spectrum and the semigroup generated by $\Tab$. Since $\Tab-\TT_{\alpha,0}$ will turn out to be  $\TT_{\alpha,0}$-compact, we first study the spectrum of $\TT_{\alpha,0}$.

\begin{lem}\label{lem:spectrum-TTa0}
 $\sigma (\TT_{\alpha,0}) \subset \{\Re \lambda \le  1 - \frac5{4\alpha}\}$.    
\end{lem}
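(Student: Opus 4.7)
The plan is to identify the semigroup $\ee^{\tau\TT_{\alpha,0}}$ explicitly by inverting the similarity variables transformation, and then read off the spectral inclusion from the resulting exponential growth bound. The similarity variables $\xi = x/t^{1/2\alpha}$, $\tau = \log t$, $U(\xi,\tau) = t^{1-1/2\alpha}u(x,t)$ of \eqref{e:ss-variables} were designed so that the linear fractional heat equation $\del_t u + \fd^\alpha u = 0$ (with $u$ divergence-free) corresponds to $\del_\tau U = \TT_{\alpha,0} U$; this is just \eqref{e:ss-vel} after dropping the nonlinear and forcing terms and applying the Leray projector. Identifying $\tau = 0$ with $t = 1$, this gives the explicit formula
\[
(\ee^{\tau\TT_{\alpha,0}}U_0)(\xi) = \ee^{\tau(1-1/2\alpha)}\bigl(\ee^{-(\ee^\tau-1)\fd^\alpha}U_0\bigr)(\ee^{\tau/2\alpha}\xi),
\]
which preserves divergence-freeness since both dilation and $\fd^\alpha$ commute with $\mathbb P$. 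Well-posedness and strong continuity of this semigroup on $L^2$ reduce to standard fractional heat semigroup facts, in the same spirit as Lemma \ref{lem:densely-def-Lab}.

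Next I would compute the operator norm of this semigroup by a direct change of variables. Substituting $x = \ee^{\tau/2\alpha}\xi$ in the $L^2$ integral yields
\[
\|\ee^{\tau\TT_{\alpha,0}}U_0\|_{L^2}^2 = \ee^{\tau(2-5/2\alpha)}\|\ee^{-(\ee^\tau-1)\fd^\alpha}U_0\|_{L^2}^2 \le \ee^{\tau(2-5/2\alpha)}\|U_0\|_{L^2}^2,
\]
where the inequality is the $L^2$-contractivity of the fractional heat semigroup (which follows from Plancherel, since $\ee^{-t|k|^{2\alpha}}\le 1$). Taking square roots gives the uniform bound $\|\ee^{\tau\TT_{\alpha,0}}\|_{L^2\to L^2} \le \ee^{\tau(1-5/4\alpha)}$.

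Finally, for any $\lambda$ with $\Re\lambda > 1-5/(4\alpha)$, the Laplace transform representation
\[
R(\lambda,\TT_{\alpha,0})U_0 = \int_0^\infty \ee^{-\lambda\tau}\ee^{\tau\TT_{\alpha,0}}U_0 \dd\tau
\]
converges absolutely by the growth bound above and provides a bounded two-sided inverse of $\lambda - \TT_{\alpha,0}$, so $\lambda\in\rho(\TT_{\alpha,0})$. This is equivalent to the claimed inclusion $\sigma(\TT_{\alpha,0})\subset\{\Re\lambda\le 1-5/(4\alpha)\}$. I do not anticipate any substantial obstacle; the only technical point is verifying that the explicit formula above is indeed the $C_0$-semigroup generated by $\TT_{\alpha,0}$ with domain \eqref{e:defn-Tab}, which is routine given the design of the similarity variables, but would need to be stated cleanly since it is also the basis for later semigroup-based arguments.
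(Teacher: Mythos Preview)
Your argument is correct and rests on the same core idea as the paper—undoing the similarity variables to reduce to the fractional heat equation—but you package it differently. The paper solves the resolvent equation $(\lambda-\TT_{\alpha,0})U=F$ directly: it sets $h(x,t)=t^{\lambda-1+1/2\alpha}U(x/t^{1/2\alpha})$ and $g(x,t)=t^{\lambda-2+1/2\alpha}F(x/t^{1/2\alpha})$, observes that $h$ solves the inhomogeneous fractional heat equation with zero data (attained in $L^2$ precisely when $\Re\lambda>1-5/4\alpha$), and reads off $U=h(\cdot,1)\in H^{2\alpha}$. You instead write down the semigroup $\ee^{\tau\TT_{\alpha,0}}$ explicitly, compute the sharp growth bound $\|\ee^{\tau\TT_{\alpha,0}}\|_{L^2\to L^2}\le \ee^{\tau(1-5/4\alpha)}$, and invoke the Laplace representation of the resolvent. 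Your route is arguably cleaner and yields a formula that is useful in its own right; the paper's route has the side benefit of producing the $H^{2\alpha}$ bound $\|U\|_{H^{2\alpha}}\lesssim\|F\|_{L^2}$, which is quoted in the very next lemma to establish $\TT_{\alpha,0}$-compactness. If you adopt your version, you should either supply that $H^{2\alpha}$ estimate separately or note that it follows from smoothing of the fractional heat semigroup combined with your explicit formula.
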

\begin{proof}
 Let $\Re \lambda > 1- \frac5{4\alpha}.$ To show that $\lambda\in \rho(\TT_{\alpha,0})$, we need to show that $(\lambda - \TT_{\alpha,0})U = F$ can be solved for $U$ with $\|U\|_{H^{2\alpha}} \lesssim \|F\|_{L^2}$. This will be proven by once again undoing the similarity transform, as in the proof of Lemma \ref{lem:densely-def-Lab}. Suppose we are given $U$. The definition \eqref{e:defn-Tab} leads us to define
 \[  h(x,t) \coloneq t^{\lambda - 1 + \frac1{2\alpha}} U\left(\frac{x}{t^{1/2\alpha}}\right), \quad g(x,t) = t^{\lambda - 2 + \frac1{2\alpha}} F\left(\frac{x}{t^{1/2\alpha}}\right). \]
 They solve $\del_t h + \fd^\alpha h = g$ attaining the data $h|_{t=0} = 0$ strongly in $L^2$, since $\Re\lambda > 1 - \frac5{4\alpha}  $. Since $g \in C^0((0,T]; L^2)$, it follows that $h\in C^0((0,T]; H^{2\alpha})$, in particular $U = h(\cdot,1) \in H^{2\alpha}$ with 
 $\|\fd^{2\alpha} U\|_{L^2} \lesssim \|F\|_{L^2}.$  Read backwards, the above explains how to construct $U$ given the force $F$, using the well-posedness of the fractional heat equation. This shows that $\{\Re\lambda > 1 - \frac5{4\alpha} \} \subset \rho(\TT_{\alpha,0})$, hence the result. 
\end{proof}

\begin{lem} $\KK_{\alpha,\beta}$ defined by 
$\KK_{\alpha,\beta} U \coloneq \Tab U-\TT_{\alpha,0} U = \beta \mathbb P ( \bar U\cdot\nabla U+ U\cdot\nabla \bar U)$ is $\TT_{\alpha,0}$-compact.    \label{lem:rel-cpct}
\end{lem}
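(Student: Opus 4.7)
The plan is to show that $\KK_{\alpha,\beta}$ is compact from the graph-normed space $(D(\TT_{\alpha,0}),\|\cdot\|_{L^2}+\|\TT_{\alpha,0}\cdot\|_{L^2})$ into $L^2$. The strategy combines the resolvent estimate underlying Lemma \ref{lem:spectrum-TTa0} (to promote graph-norm control to $H^{2\alpha}$ control) with the smoothness and decay of $\bar U$, finished off by Rellich--Kondrachov.

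First, I would use the resolvent estimate built into the proof of Lemma \ref{lem:spectrum-TTa0}: for any fixed $\lambda$ with $\Re\lambda>1-\frac{5}{4\alpha}$,
\[\|U\|_{H^{2\alpha}}\lesssim \|(\lambda-\TT_{\alpha,0})U\|_{L^2}\lesssim \|U\|_{L^2}+\|\TT_{\alpha,0}U\|_{L^2}.\]
Hence any sequence bounded in the graph norm of $\TT_{\alpha,0}$ is bounded in $H^{2\alpha}$, so it suffices to show that $U\mapsto \bar U\cdot\nabla U + U\cdot\nabla\bar U$ sends bounded sets of $H^{2\alpha}$ to precompact subsets of $L^2$ (using that $\mathbb P$ is bounded on $L^2$). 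Since $\bar\Omega\in C^\infty_c$, direct estimates on $\bar U=\BS\bar\Omega$ give $\bar U,\nabla\bar U\in L^\infty(\mathbb R^3)$ with the far-field decay $|\bar U(\xi)|\lesssim \langle\xi\rangle^{-2}$ and $|\nabla\bar U(\xi)|\lesssim \langle\xi\rangle^{-3}$ (for the latter, write $\nabla\bar U=\int \nabla K(\xi-\eta)\times\bar\Omega(\eta)\,d\eta$).

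Set $V_n := \bar U\cdot\nabla U_n + U_n\cdot\nabla\bar U$ and let $\chi_R$ be a smooth cutoff of $B_R$. For the tail, the $L^\infty$-decay of $\bar U,\nabla\bar U$ together with the $H^1$-bound on $U_n$ (available since $2\alpha>1$) give the uniform estimate $\|(1-\chi_R)V_n\|_{L^2}\lesssim R^{-2}\|U_n\|_{H^{2\alpha}}$, which tends to $0$ as $R\to\infty$. For the local contribution $\chi_R V_n$, $U_n$ is bounded in $H^{2\alpha}(B_R)$ and $\nabla U_n$ in $H^{2\alpha-1}(B_R)$. Since $\alpha>1/2$ we have $2\alpha-1>0$, so Rellich--Kondrachov provides compact embeddings of both spaces into $L^2(B_R)$; together with $\bar U,\nabla\bar U\in L^\infty$ this yields a subsequence along which $\chi_R V_n$ is Cauchy in $L^2$ for each $R$. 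A standard diagonal argument over $R=1,2,\dots$ then extracts the desired $L^2$-convergent subsequence of $V_n$.

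The main thing to get right will be the first step, converting a bound in the graph norm of $\TT_{\alpha,0}$ (which tangles the unbounded drift $\xi\cdot\nabla U$ with $\fd^\alpha U$) into a plain $H^{2\alpha}$ bound; but this is already prepared by the resolvent analysis inside Lemma \ref{lem:spectrum-TTa0}. The rest is a textbook Rellich--Kondrachov-plus-tail argument, and the condition $\alpha>1/2$ enters precisely at the local step in order to handle $\bar U\cdot\nabla U_n$, as anticipated by the remark following Theorem \ref{t:main}.
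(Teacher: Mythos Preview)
Your argument is correct and follows the same strategy as the paper: convert graph-norm control into $H^{2\alpha}$ control via the resolvent estimate from Lemma~\ref{lem:spectrum-TTa0}, then localize and invoke Rellich--Kondrachov. The paper's proof is terser and appeals to ``the compactness of the support of $\bar U$'', whereas you more carefully use only the decay of $\bar U,\nabla\bar U$ together with a tail estimate; since $\bar U=\BS\bar\Omega$ need not literally be compactly supported, your version is in fact the cleaner justification.
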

\begin{proof}
By the estimates in the proof of Lemma \ref{lem:spectrum-TTa0}, the sequence $U_n $ belongs to $D(\Tab)$ with $\TT_{\alpha,0} U_n$ bounded in norm, and is in particular bounded in $H^{2\alpha}$. Hence, the relative compactness follows by the compactness of the support of $\bar U$, which allows the use of the compact embedding of Sobolev spaces on bounded domains. 
\end{proof}
\begin{lem}
The semigroup $\ee^{\tau \Tab}$ is well-defined and strongly continuous. It is the solution operator for the initial value problem
\[ \del_\tau U - \Tab U = 0,\quad U|_{\tau = 0} = U_0 \]
\end{lem}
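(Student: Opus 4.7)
The plan is to establish semigroup generation in two steps: first for the unperturbed operator $\TT_{\alpha,0}$ by reduction to the fractional heat equation, and then for the full operator $\Tab$ by perturbation. For the first step, as in the proofs of Lemmas \ref{lem:densely-def-Lab} and \ref{lem:spectrum-TTa0}, undoing the similarity variables via $u(x,t)\coloneq t^{-(1-\frac1{2\alpha})} U(xt^{-1/2\alpha}, \log t)$ converts $\del_\tau U = \TT_{\alpha,0} U$ with $U|_{\tau=0}=U_0$ into $\del_t u + \fd^\alpha u = 0$ with $u|_{t=1}=U_0$. Since $\ee^{-t\fd^\alpha}$ is a $C_0$-semigroup on $L^2$, reversing the change of variables gives the explicit formula
\[ (\ee^{\tau \TT_{\alpha,0}} U_0)(\xi) \coloneq \ee^{\tau(1-\frac1{2\alpha})}\bigl(\ee^{-(\ee^\tau - 1)\fd^\alpha} U_0\bigr)\bigl(\ee^{\tau/2\alpha}\xi\bigr), \]
which one checks directly is a $C_0$-semigroup on $L^2$ with generator $\TT_{\alpha,0}$, preserving divergence-free fields since both the dilation and $\fd^\alpha$ do.

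For the full operator, Lemma \ref{lem:rel-cpct} gives that $\KK_{\alpha,\beta} = \Tab - \TT_{\alpha,0}$ is $\TT_{\alpha,0}$-compact. Since $\TT_{\alpha,0}$ is closed (by the same argument as in Lemma \ref{lem:densely-def-Lab}), a classical consequence is that $\KK_{\alpha,\beta}$ has $\TT_{\alpha,0}$-bound zero. I would then invoke a standard perturbation theorem for $C_0$-semigroup generators under relatively-bounded perturbations with small bound (for instance the Miyadera--Voigt theorem) to conclude that $\Tab$ with domain $D(\TT_{\alpha,0})$ generates a $C_0$-semigroup on $L^2$; the identification with the solution operator for the initial value problem then follows from the Hille--Yosida theorem, together with the density of $D(\Tab)$ and closedness of $\Tab$.

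The main obstacle is likely the verification of the perturbation hypothesis: although $\TT_{\alpha,0}$-compactness cleanly implies $\TT_{\alpha,0}$-bound zero for closed operators, the elementary bounded-perturbation theorem for $C_0$-semigroup generators demands true boundedness on $L^2$, which $\KK_{\alpha,\beta}$ does not satisfy. One therefore needs either an integrability estimate of the form $\int_0^{\tau_0} \|\KK_{\alpha,\beta}\ee^{\sigma \TT_{\alpha,0}} U_0\|_{L^2}\dd\sigma \le \gamma \|U_0\|_{L^2}$ for some $\gamma<1$ (which follows from the fractional-heat smoothing transferred through the explicit formula above, combined with the compact support of $\bar U$ to control the first-derivative action of $\KK_{\alpha,\beta}$), or, as an alternative, a direct Duhamel contraction argument in $C([0,T]; L^2)\cap L^2([0,T]; H^\alpha)$ using Lemma \ref{lem:fracheat-est} — more hands-on, but arguably a cleaner fit with the rest of the paper's methodology.
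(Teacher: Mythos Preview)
Your proposal is correct but follows a genuinely different route from the paper. The paper does not split off $\TT_{\alpha,0}$ and then perturb; instead it undoes the similarity variables for the \emph{full} operator $\Tab$, obtaining the linear drift--diffusion problem
\[
\del_t u + \beta\,\mathbb P(\bar u\cdot\nabla u + u\cdot\nabla\bar u) + \fd^\alpha u = 0,\qquad u|_{t=1}=U_0,
\]
and appeals to standard parabolic theory (using that $\bar u$ is smooth for $t\ge 1$) to produce a solution operator $S_\tau$ together with the local smoothing estimates \eqref{e:local-estimates}. It then argues that the generator $\AA$ of $S_\tau$ coincides with $\Tab$ by checking agreement on divergence-free Schwartz fields and showing $D(\Tab)\subset D(\AA)$ via a closure argument. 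Your approach, by contrast, isolates the explicit semigroup for $\TT_{\alpha,0}$ and invokes Miyadera--Voigt for the first-order perturbation $\KK_{\alpha,\beta}$; the required integrability hypothesis indeed follows from the fractional-heat smoothing (the singularity $\sigma^{-1/2\alpha}$ is integrable precisely because $\alpha>1/2$), so the argument goes through. The trade-off is that the paper's direct route simultaneously delivers the higher-order smoothing bounds \eqref{e:local-estimates} that feed into Lemma~\ref{lem:parabolic-reg}, whereas your perturbative route is cleaner abstractly but would still require a separate smoothing argument downstream.
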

\begin{proof}
    We undo the similarity variables to find the more standard problem    \[ \del_t u  + \beta \mathbb P(\bar u \cdot\nabla u + u \cdot\nabla \bar u) + \fd ^\alpha u =0, \quad u|_{t = 1} = U_0.\]
         \[\|u(t)\|_{L^2} + (t-1)^{k/2\alpha} \|\nabla^{k} u(t)\|_{L^2} \lesssim_{\bar U,T} \|u(1)\|_{L^2},\]
        which translate to the following estimates for $U$,
\begin{align}
    \|U(\tau )\|_{L^2} + \tau ^{k/2\alpha } \|\nabla ^{k} U(\tau )\|_{L^2} \lesssim_{\bar U,T} \|U_0\|_{L^2}.  \label{e:local-estimates}
\end{align}
    
    This well-defines the solution operator $S_t U_0 = U(\cdot,t)$ which is generated by some closed operator $\AA$ densely defined on some domain $D(\AA)$, and we need to show that $\AA = \Tab$. 
    
    $D(\AA)$ contains the subspace of divergence-free Schwartz fields, and $\AA=\Tab$ for such fields. 
    So, we are done if we show that $D(\Tab)\subset D(\AA).$
    Let $V\in D(\Tab)$, $\lambda \in \rho(\Tab)$  and let $V_n$ be a sequence of divergence-free Schwartz fields such that $V_n\to V$ in $H^{2\alpha}$. Then $(\Tab-\lambda) V_n \to (\Tab-\lambda) V$ in $L^2$. But $\AA$ is closed and agrees with $\Tab$ along the sequence $V_n$, so $V\in D(\AA)$.
    \end{proof}
    For a closed, densely defined operator $\AA$ that generates a semigroup $e^{\tau \AA}$, recall from \cite{zbMATH01354832} the following definitions:
    \begin{align*}
     \omega_{0} (\AA)&\coloneq \inf_{\tau>0} \frac1\tau \log \big\| \ee^{\tau \AA } \big\|, \\ 
         \omega_{\ess} (\AA)&\coloneq \inf_{\tau>0} \frac1\tau \log \inf\big \{  \big\| \ee^{\tau \AA } - K\big\|:K \text{ is compact} \big\}.
    \end{align*}
\begin{lem}
$
\omega_{\ess} (\Tab) \le  1 - \frac5{4\alpha}.
$
\end{lem}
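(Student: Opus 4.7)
The plan is to reduce to the unperturbed operator $\TT_{\alpha,0}$ by showing that the difference $e^{\tau\Tab}-e^{\tau\TT_{\alpha,0}}$ is compact for every $\tau>0$. Once we have this and the bound $\|e^{\tau\TT_{\alpha,0}}\|\le e^{\tau(1-5/(4\alpha))}$, the definition of $\omega_{\ess}$ gives
\[\inf\{\|e^{\tau\Tab}-K\|:K\text{ compact}\}\le \|e^{\tau\TT_{\alpha,0}}\|\le e^{\tau(1-5/(4\alpha))},\]
whence $\omega_{\ess}(\Tab)\le 1-\frac{5}{4\alpha}$.

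The semigroup bound for $\TT_{\alpha,0}$ is obtained by undoing the similarity-variable change one more time, exactly as in the proof of Lemma \ref{lem:spectrum-TTa0} and the preceding lemma. If $U_0\in L^2$ and we set $u(x,t)\coloneq t^{-(1-1/(2\alpha))}(e^{\tau\TT_{\alpha,0}}U_0)(xt^{-1/(2\alpha)},\log t)$ with $t=e^\tau$, then $u$ solves the fractional heat equation $\del_t u+\fd^\alpha u=0$ with $u|_{t=1}=U_0$. A direct change of variables shows $\|U(\tau)\|_{L^2}=t^{1-5/(4\alpha)}\|u(t)\|_{L^2}$, and the $L^2$-contractivity of the fractional heat semigroup then gives $\|e^{\tau\TT_{\alpha,0}}U_0\|_{L^2}\le e^{\tau(1-5/(4\alpha))}\|U_0\|_{L^2}$.

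For the compactness of the difference, I would use Duhamel's formula
\[e^{\tau\Tab}-e^{\tau\TT_{\alpha,0}}=\int_0^\tau e^{(\tau-s)\Tab}\,\KK_{\alpha,\beta}\,e^{s\TT_{\alpha,0}}\dd s,\]
and show the integrand is compact for each $s\in(0,\tau)$ with norm-integrable bound. For compactness: the estimate \eqref{e:local-estimates} shows $e^{s\TT_{\alpha,0}}:L^2\to H^{2\alpha}$ is bounded for $s>0$, while $\KK_{\alpha,\beta} V=\beta\mathbb P(\bar U\cdot\nabla V+V\cdot\nabla\bar U)$ is a first-order operator whose coefficients $\bar U,\nabla\bar U$ have compact support, so Rellich--Kondrachov gives compactness of $\KK_{\alpha,\beta}:H^{2\alpha}\to L^2$. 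The composition $e^{(\tau-s)\Tab}\KK_{\alpha,\beta}e^{s\TT_{\alpha,0}}$ is therefore compact. For norm-integrability, the same estimate \eqref{e:local-estimates} gives $\|e^{s\TT_{\alpha,0}}U_0\|_{H^1}\lesssim s^{-1/(2\alpha)}\|U_0\|_{L^2}$, hence $\|\KK_{\alpha,\beta}e^{s\TT_{\alpha,0}}\|_{\textup{op}}\lesssim_\beta s^{-1/(2\alpha)}$; since $\|e^{(\tau-s)\Tab}\|_{\textup{op}}$ is bounded on the bounded interval $s\in[0,\tau]$ by the exponential bound on the semigroup, the Bochner integral converges in operator norm, and a norm limit of compact operators is compact.

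The main obstacle is precisely the integrability of $s^{-1/(2\alpha)}$ near $s=0$: this is where the assumption $\alpha>1/2$ (already flagged in the introductory remark) enters, since it ensures $1/(2\alpha)<1$. Everything else is either a direct consequence of the similarity-variable computation or a standard compactness/perturbation argument, so once the Duhamel integrand is controlled the conclusion is immediate from the definition of $\omega_{\ess}$.
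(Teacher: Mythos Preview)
Your argument is correct and more self-contained than the paper's. The paper proceeds by invoking the abstract fact that a relatively compact perturbation of the generator leaves $\omega_{\ess}$ unchanged, so that $\omega_{\ess}(\Tab)=\omega_{\ess}(\TT_{\alpha,0})\le \omega_0(\TT_{\alpha,0})$, and then bounds $\omega_0(\TT_{\alpha,0})$ via the spectrum computation of Lemma~\ref{lem:spectrum-TTa0} together with (an implicit appeal to) the spectral mapping theorem for the analytic semigroup $e^{\tau\TT_{\alpha,0}}$. You instead establish both ingredients by hand: the explicit bound $\|e^{\tau\TT_{\alpha,0}}\|\le e^{\tau(1-5/(4\alpha))}$ via the similarity change of variables and $L^2$-contractivity of the fractional heat flow, and compactness of $e^{\tau\Tab}-e^{\tau\TT_{\alpha,0}}$ directly from the Duhamel representation. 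Your approach has the virtue of making the role of $\alpha>1/2$ completely transparent (integrability of $s^{-1/(2\alpha)}$), and it avoids citing the somewhat delicate invariance theorem for $\omega_{\ess}$; the paper's route is shorter but leans more heavily on the Engel--Nagel machinery. One small point worth tightening: to pass from ``integrand compact for each $s$'' and ``norm integrable'' to ``integral compact'', you are implicitly using that $s\mapsto e^{(\tau-s)\Tab}\KK_{\alpha,\beta}e^{s\TT_{\alpha,0}}$ is norm-continuous on $(0,\tau)$ so that Riemann-sum approximation applies; this holds because both semigroups are analytic, hence norm-continuous for positive times.
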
 
\begin{proof}
By the relative compactness proved in Lemma \ref{lem:rel-cpct}, we have
\[ \omega_{\ess}(\Tab) = \omega_{\ess}(\TT_{\alpha,0}) \le \omega_0 (\TT_{\alpha,0}),\]
and $\omega_0(\TT_{\alpha,0}) \le \log \sup\{\Re \lambda: \lambda\in \sigma(\ee^{\tau\TT_{\alpha,0}})\} $, which is in turn upper bounded by $1- \frac5{4\alpha}$ by Lemma \ref{lem:spectrum-TTa0}. 
\end{proof}

From  \cite[Cor 2.11, p. 258]{zbMATH01354832}, it follows that for all $w>\omega_{\ess} (\Tab)$, $\sigma(\Tab)\cap \{\Re\lambda>w\}$  consists of only finitely many eigenvalues with finite multiplicity. In addition, for each $\delta>0$ and for each divergence-free $U_0\in L^2$, 
\begin{align}
	\|\ee^{\tau \Tab} U_0 \|_{L^2} \lesssim_\delta \ee^{\tau(a+\delta)} \|U_0\|_{L^2}.\label{e:most-unstable-eigenvalue}
\end{align}
We write $\lambda=a+b\ii$, $a>0$ for the largest eigenvalue of $\Tab$, with eigenfunction $\eta$. It follows that
\begin{align}
    \|\ee^{\tau \Tab}\eta  \|_{L^2} =e^{\tau a} \|\eta\|_{L^2}. \label{e:eigen-estimates}
\end{align} 
\begin{lem}[Parabolic regularity]	\label{lem:parabolic-reg}
For all $\sigma'\ge \sigma\ge 0$, $\delta>0$, and $\tau>0$,
\begin{align}
    \| \ee^{\tau \Tab} U_0\|_{H^{\sigma' }}\lesssim_{\sigma,\sigma',\delta} \frac{\ee^{\tau(a+\delta)} \|U_0\|_{H^{\sigma}}}{\tau^{(\sigma'-\sigma)/2\alpha }} .\label{e:parabolic-reg}
\end{align}
\end{lem}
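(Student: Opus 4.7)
The plan is to undo the similarity-variables transformation, reducing \eqref{e:parabolic-reg} to a parabolic regularity estimate for a fractional heat equation with smooth compactly supported drift, then to patch a short-time smoothing regime with a long-time regime governed by the $L^2$ semigroup bound \eqref{e:most-unstable-eigenvalue}.

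For the short-time regime $\tau\in(0,1]$, I would apply the change of variables $u(x,t) = t^{-1+1/2\alpha} U(x/t^{1/2\alpha}, \log t)$ used throughout the preceding lemmas, so that $u$ solves
\[ \del_t u + \fd^\alpha u = -\beta \mathbb P(\bar u\cdot\nabla u + u\cdot\nabla \bar u) =: F[u], \qquad u|_{t=1}=U_0, \]
on $t\in[1,e]$. Here $\bar u$ is smooth in $x$ and $t$ with uniform compact support in $x$, so $F$ is a first-order differential operator with smooth coefficients satisfying $\|F[u]\|_{H^{\sigma'-1}}\lesssim\|u\|_{H^{\sigma'}}$. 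Duhamel's formula combined with Lemma \ref{lem:fracheat-est} then yields the Volterra inequality
\[ \|u(t)\|_{H^{\sigma'}} \lesssim (t-1)^{-(\sigma'-\sigma)/2\alpha} \|U_0\|_{H^\sigma} + \int_1^t (t-s)^{-1/2\alpha} \|u(s)\|_{H^{\sigma'}}\,\dd s. \]
Since $\alpha>1/2$ makes the kernel $(t-s)^{-1/2\alpha}$ integrable, a weakly-singular Grönwall argument closes the estimate when $\sigma'-\sigma<2\alpha$; larger gaps are handled by splitting $[1,t]$ into finitely many subintervals and iterating the base estimate, gaining less than $2\alpha$ derivatives per step. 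Translating back via $\tau=\log t\simeq t-1$ (the inhomogeneous Sobolev norms agree up to constants on the bounded range $t\in[1,e]$) and using $\ee^{\tau(a+\delta)}\ge 1$ gives \eqref{e:parabolic-reg} for $\tau\in(0,1]$.

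For the long-time regime $\tau>1$, I would use the semigroup composition $\ee^{\tau\Tab}U_0 = \ee^{\Tab}\bigl(\ee^{(\tau-1)\Tab}U_0\bigr)$, apply the short-time bound at $\tau=1$ with $\sigma=0$ to the outer factor, and the $L^2$ estimate \eqref{e:most-unstable-eigenvalue} to the inner factor: for any $\delta'\in(0,\delta)$,
\[ \|\ee^{\tau\Tab}U_0\|_{H^{\sigma'}} \lesssim \|\ee^{(\tau-1)\Tab}U_0\|_{L^2} \lesssim \ee^{(\tau-1)(a+\delta')}\|U_0\|_{L^2} \le \ee^{\tau(a+\delta')}\|U_0\|_{H^\sigma}. \]
For $\tau\ge 1$ the missing polynomial factor satisfies $\tau^{(\sigma'-\sigma)/2\alpha}\le C_{\delta,\delta'}\ee^{(\delta-\delta')\tau}$, so multiplying through reinstates $\tau^{-(\sigma'-\sigma)/2\alpha}$ on the right-hand side at the cost of enlarging the exponential rate from $a+\delta'$ to $a+\delta$.

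The step I expect to require the most work is the weakly-singular Volterra closure in the short-time regime, in particular the bootstrap needed to push the regularity gap $\sigma'-\sigma$ past $2\alpha$ one chunk at a time. The entire argument relies on the paper's standing assumption $\alpha>1/2$ (also invoked in Lemmas \ref{lem:unstable-vel-op-Tab} and \ref{lem:rel-cpct}), which is exactly what makes the kernel $(t-s)^{-1/2\alpha}$ integrable and lets the fractional dissipation $\fd^\alpha$ dominate the first-order drift $\KK_{\alpha,\beta}$.
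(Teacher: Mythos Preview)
Your proposal is correct and follows essentially the same strategy as the paper: split into a short-time regime (undo similarity variables and use parabolic smoothing for the fractional heat equation with smooth drift) and a long-time regime (compose the time-one smoothing estimate with the $L^2$ semigroup bound \eqref{e:most-unstable-eigenvalue}). Your short-time argument via Duhamel plus a weakly-singular Gr\"onwall inequality is more explicit than the paper's, which simply invokes the local estimate \eqref{e:local-estimates} and says the $\sigma>0$ case follows by differentiating the equation, but the underlying content is the same.
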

\begin{proof}
For times $\tau \le1$ say, and $\sigma = 0$, these estimates are implied by the estimates in \eqref{e:local-estimates}. The estimates for $\sigma>0$ are similarly proven by differentiating the equation for $u(x,t) = \frac1{t^{1-1/2\alpha} } U(xt^{-1/2\alpha},\log t)$.

For large times, we first use the small time estimate to drop to $L^2$:
\[ \| \ee^{\tau \Tab} U_0\|_{H^{\sigma' }}\lesssim_{a,\sigma,\delta} \| \ee^{(\tau-1) \Tab} U_0\|_{L^2}. \]
It follows from \eqref{e:most-unstable-eigenvalue} that $\| \ee^{\tau \Tab} U_0\|_{H^{\sigma'}}  \lesssim_{a,\sigma,\delta} e^{\tau(a+\delta)} \|U_0\|_{L^2}$, which proves the lemma.
\end{proof}
\subsection{Nonuniqueness}\label{ss:nonunique}
For completeness, we repeat the argument laid out in \cite[\S4]{zbMATH07583008} to construct the second solution of \eqref{e:ns}.

We look for a solution $U$ to \eqref{e:ss-vel} that vanishes as $\tau\to -\infty$  with the ansatz $U=\beta\bar U + \Ulin + \Uper$, where $\Ulin\coloneq \Re(e^{\lambda\tau }\eta)$ solves the linear  equation $(\del_\tau - \Tab)\Ulin = 0$. Substituting this ansatz into \eqref{e:ss-vel}, we find that the perturbation $\Uper$ satisfies 
 the integral equation $\Uper = \mathcal T(\Uper) $, where
\begin{align*}
&\mathcal T(\Uper)(\tau) \coloneq \\ & -\int\limits_{-\infty}^\tau  \ee^{(\tau-s) \Tab} \mathbb P\Big(\Ulin\cdot\nabla\Ulin+ \Uper\cdot\nabla\Ulin+ \Ulin\cdot\nabla\Uper+ \Uper\cdot\nabla\Uper\Big) \dd s.     
\end{align*}
We construct $\Uper$ as a fixed point of $\mathcal T$ via the contraction mapping theorem. So, we set $N>5/2$, $\epsilon_0\in(0,a)$ and introduce the space 
\[  X \coloneq  X(N,T)\coloneq C^0((-\infty,T];H^N(\mathbb R^3;\mathbb R^3))\]
with the weighted norm ($\epsilon_0>0$ to be chosen momentarily)
\[ \|U\|_X \coloneq \sup_{\tau \le  T} \ee^{-(a+\epsilon_0) \tau} \|U(\tau)\|_{H^N}.\]
\begin{prop}\label{p:contraction}
 Let $B_X$ be the closed unit ball of $X$.
Then for $T$ sufficiently large and negative, and $N>5/2$,  $\mathcal T$ maps $B_X\to B_X$ and is a contraction.
\end{prop}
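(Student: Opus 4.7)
The plan is to apply the Banach fixed point theorem to $\mathcal T$ on the complete metric space $B_X$. The two key ingredients are the parabolic regularity estimate of Lemma \ref{lem:parabolic-reg} and the fact that $H^{N-1}(\mathbb R^3)$ is a Banach algebra, which holds because $N-1>3/2$.

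First I would estimate a generic bilinear term. For $U, V \in H^N$, the Sobolev algebra structure and the boundedness of the Leray projector on $H^{N-1}$ give
$$\|\mathbb P(U\cdot\nabla V)\|_{H^{N-1}} \lesssim \|U\|_{H^N}\|V\|_{H^N}.$$
Applying Lemma \ref{lem:parabolic-reg} with $\sigma = N-1$, $\sigma' = N$ and a fixed $\delta\in(0,\epsilon_0)$ yields
$$\big\|\ee^{(\tau-s)\Tab}\mathbb P(U\cdot\nabla V)(s)\big\|_{H^N} \lesssim \frac{\ee^{(\tau-s)(a+\delta)}}{(\tau-s)^{1/2\alpha}}\|U(s)\|_{H^N}\|V(s)\|_{H^N}.$$
The singularity at $s=\tau$ is integrable precisely because $\alpha>1/2$ gives $1/2\alpha<1$, which is the sole place the hypothesis $\alpha>1/2$ enters the nonlinear argument.

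Next I would substitute $\|\Ulin(s)\|_{H^N}\lesssim \ee^{as}$ and, for $\Uper\in B_X$, $\|\Uper(s)\|_{H^N}\le \ee^{(a+\epsilon_0)s}$ into each of the four pieces of $\mathcal T(\Uper)$. After the change of variables $u=\tau-s$, each integral reduces to $\int_0^\infty u^{-1/2\alpha}\ee^{-cu}\dd u$ times an explicit exponential in $\tau$, where in every case $c = C-a-\delta>0$ for $C$ equal to the relevant bilinear growth rate ($2a$, $2a+\epsilon_0$, or $2a+2\epsilon_0$). Multiplying by the weight $\ee^{-(a+\epsilon_0)\tau}$ and taking the supremum over $\tau\le T$, the four contributions to $\|\mathcal T(\Uper)\|_X$ are bounded by a constant times
$$\ee^{(a-\epsilon_0)T} + \ee^{aT} + \ee^{(a+\epsilon_0)T}.$$
Since $\epsilon_0\in(0,a)$, all three exponents are positive, so every term tends to $0$ as $T\to-\infty$, forcing $\|\mathcal T(\Uper)\|_X\le 1$ for $T$ sufficiently negative.

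For the contraction property, I would decompose
$$\mathcal T(U_1)-\mathcal T(U_2) = -\int_{-\infty}^\tau \ee^{(\tau-s)\Tab}\mathbb P\bigl[(U_1-U_2)\cdot\nabla(\Ulin+U_1) + (\Ulin+U_2)\cdot\nabla(U_1-U_2)\bigr]\dd s$$
and apply the same estimates to obtain
$$\|\mathcal T(U_1)-\mathcal T(U_2)\|_X \lesssim \bigl(\ee^{aT}+\ee^{(a+\epsilon_0)T}\bigr)\|U_1-U_2\|_X,$$
which is a strict contraction for $T$ sufficiently large and negative. Continuity in $\tau$ of $\mathcal T(\Uper)$ follows from strong continuity of the semigroup and dominated convergence, placing $\mathcal T(\Uper)\in X$. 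The main obstacle is simply keeping the loss $(\tau-s)^{-1/2\alpha}$ in balance with the exponential weights: the exponent of the $\ee^{(\tau-s)(a+\delta)}$ factor from parabolic regularity must be strictly smaller than the bilinear growth rate of the integrand, which is what determines both the admissible range of $\delta$ and the necessity of choosing $T$ large and negative.
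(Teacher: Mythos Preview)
Your proof is correct and follows essentially the same route as the paper: both combine the Banach algebra property of $H^{N-1}$ with the parabolic smoothing estimate of Lemma~\ref{lem:parabolic-reg}, then balance the exponential from the semigroup against the bilinear growth rates and close by taking $T\ll -1$. The only cosmetic difference is that the paper applies Lemma~\ref{lem:parabolic-reg} with $\sigma'=N+\alpha-\tfrac12$ (giving the singularity $(\tau-s)^{-(2\alpha+1)/4\alpha}$) rather than your $\sigma'=N$; this stronger estimate is not needed for the proposition itself but is used immediately afterwards to bootstrap $\Uper$ to $C^\infty$ regularity.
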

\begin{proof} Fix $\delta<a$ and $\epsilon_0<a$.
$\mathcal T$ splits into three terms $\mathcal T(U) = \mathcal B(U,U) + \mathcal LU + \mathcal G$, where
\begin{align}
     -\mathcal B ( U,V) &= \int_{-\infty}^\tau \ee^{(\tau-s)\Tab} \mathbb P(U\cdot\nabla V) \dd s,
     \\
     -\mathcal LU &= \int_{-\infty}^\tau \ee^{(\tau-s)\Tab} \mathbb P(\Ulin \cdot\nabla U + U\cdot\nabla \Ulin )\dd s,
     \\
     -\mathcal G &= \int_{-\infty}^\tau \ee^{(\tau-s)\Tab} \mathbb P(\Ulin \cdot\nabla \Ulin ) \dd s,
\end{align}
Recall that $H^{N-1}$ for $N>5/2$ is a Banach algebra. So, we have  the estimate $\|U\cdot\nabla V\|_{H^{N-1}} \le \|U\|_{H^N} \|V\|_{H^N}$. Combining this estimate with Lemma \ref{lem:parabolic-reg} gives the following crude estimate
\begin{align}
     &\|\mathcal B(U,U)(\tau) \|_{H^{N+(\alpha -1 /2)}} 
     \lesssim_{N,\delta} \int_{-\infty}^\tau  \frac{\ee^{(\tau-s)(a+\delta)}}{(\tau-s)^{(2\alpha+1)/4\alpha }} \|U\|_{H^N}^2 \dd s \notag 
    \\
    &\le \ee^{2\tau(a+\epsilon_0)}\|U\|^2_X\int_{-\infty}^\tau \frac{\ee^{(\tau-s)(\delta-a-2\epsilon_0)}}{(\tau-s)^{(2\alpha+1)/4\alpha }}  \dd s  \lesssim_{\epsilon_0,\delta} \ee^{2\tau(a+\epsilon_0)}\|U\|_X^2, \label{e:bootstrap-B}
\end{align}
as long as $\delta <a+2\epsilon_0$. Hence $\|\mathcal B(U,U)\|_X \lesssim e^{T(a+\epsilon_0)} \|U\|_X^2$. 

By replacing the estimate of $U$ in  Lemma \ref{lem:parabolic-reg} by the estimate \eqref{e:eigen-estimates} of $\Ulin$, we similarly obtain (using $\delta  < a$) the estimates for all $N>5/2$,
\begin{align}
    \|\mathcal LU(\tau)\|_{H^{N}} \lesssim_{N,\epsilon_0,\delta} \ee^{\tau(2a+\epsilon_0)} \|U\|_X,
    \quad 
     \|\mathcal G(\tau)\|_{H^{N}} \lesssim_{N,\epsilon_0,\delta} \ee^{2\tau a} 
     \label{e:bootstrap-LG},
\end{align}
which lead to the estimates
\begin{align}
    \|\mathcal LU\|_{X} \lesssim_{N,\epsilon_0,\delta} \ee^{T a} \|U\|_X,
\qquad      \|\mathcal G\|_X \lesssim_{N,\epsilon_0,\delta} \ee^{T (a-\epsilon_0) }. \notag 
\end{align}
It follows that by choosing $T$ sufficiently large and negative, we can make $\|\mathcal B\|$, $\|\mathcal L\|$ and $\|\mathcal G\|_X$ as small as we wish. It follows that for $T$ sufficiently large and negative, $\|\mathcal T(U)\|_X \le 1$ if $\|U\|_X\le 1$. In addition, since
\[ \|\mathcal T(U) - \mathcal T(V) \|_X \le (2\|\mathcal B\| + \|\mathcal L\|)\|U-V\|_X, \]
Choosing $T\ll -1$ also makes  $\mathcal T|_{B_X}$ contractive. This finishes the proof.
\end{proof}

Let $\Uper\in X(N,T)$ be the unique fixed point of $\mathcal T$ guaranteed by Proposition \ref{p:contraction}. The estimates \eqref{e:bootstrap-B} and \eqref{e:bootstrap-LG} let us bootstrap to $C^\infty$ regularity in space, and shows that $\Uper$ decays as $e^{2\tau a}$ as $\tau\to-\infty$. By construction, $U = \beta \bar U + \Ulin + \Uper$ solves \eqref{e:ss-vel}. Since $|\Ulin| \gtrsim e^{\tau a}$ as $\tau\to -\infty$, $\Uper \neq -\Ulin$, so $U \neq \beta\bar U$. Finally, undoing the similarity variable transform for both $U$ and $\beta \bar U$ gives a pair of distinct solutions of \eqref{e:ns}. 
\section{Acknowledgements}\label{s:acknowledgements}
We thank the anonymous referee and the associated editor for their invaluable comments
which helped to improve the paper. This work was supported by the National Key Research and
Development Program of China, No.2020YFA0712900, No 2022YFA1006700 and NSFC Grant 12071043.

\printbibliography
\end{document}